\newtheorem{theorem}{Theorem}[section]
\newtheorem{lemma}[theorem]{Lemma}
\newtheorem{corollary}[theorem]{Corollary}
\theoremstyle{definition}
\newtheorem{remark}[theorem]{Remark}
\newtheorem{conjecture}[theorem]{Conjecture}
\begin{document}

\title{On approximation constants for Liouville numbers}

\author{Johannes Schleischitz} 

\address{Institute of Mathematics, Univ. Nat. Res. Life Sci. Vienna, 1180, Austria}


\begin{abstract}
We investigate some Diophantine approximation constants 
related to the simultaneous approximation of $(\zeta,\zeta^{2},\ldots,\zeta^{k})$ for 
Liouville numbers $\zeta$. For a certain class of Liouville numbers including the
famous representative $\sum_{n\geq 1} 10^{-n!}$ and numbers in
the Cantor set, we explicitly determine all 
approximation constants simultaneously for all $k\geq 1$.
\end{abstract}

\maketitle

{\footnotesize{Supported by the Austrian Science Fund FWF grant P24828.} \\

{\em Keywords}: geometry of numbers, successive minima, Liouville numbers, Diophantine approximation \\
Math Subject Classification 2010: 11J13, 11J25, 11J82}

\vspace{4mm}

\section{Definition and basic properties of $\lambda_{k,j}, \widehat{\lambda}_{k,j}$} \label{ggg}

\subsection{One dimensional approximation} \label{beginn}
We begin with the definition of the irrationality exponent.
For a real number $\zeta$ it is denoted by $\mu(\zeta)$ and defined as 
the supremum of $\eta\in{\mathbb{R}}$ for which
\begin{equation} \label{eq:1}
\left\vert \zeta-\frac{y}{x}\right\vert \leq x^{-\eta}
\end{equation}
has infinitely many solutions $(x,y)\in{\mathbb{N}\times\mathbb{Z}}$, where 
$\mathbb{N}=\{1,2,\ldots\}$. It follows from 
Dirichlet's Theorem, Corollary 2 in \cite{wald}, that
$\mu(\zeta)\geq 2$ for all $\zeta\in{\mathbb{R}}$. Capelli showed 
that actually $\mu(\zeta)=2$ for Lebesgue almost all $\zeta\in{\mathbb{R}}$, 
which was generalized first by Khintchine and later by Beresnevich, Dickinson and Velani \cite{velani}.
Roth's Theorem \cite{roth} asserts $\mu(\zeta)=2$ for all algebraic irrational real numbers $\zeta$. 
Irrational real numbers $\zeta$ with $\mu(\zeta)=\infty$ are
called Liouville numbers. Liouville's first example of a transcendental number  
\begin{equation} \label{eq:a}
L:=\sum_{n\geq 1} 10^{-n!}=10^{-1}+10^{-2}+10^{-6}+10^{-24}+\cdots
\end{equation}
is easily verified to be a Liouville number. 

\subsection{Simultaneous approximation: The $\mathbb{Q}$-linear independent case} \label{qli}
Generalizing this concept for simultaneous approximation of 
$\underline{\zeta}=(\zeta_{1},\ldots,\zeta_{k})\in{\mathbb{R}^{k}}$,
for $1\leq j\leq k+1$ define $\lambda_{k,j}(\underline{\zeta})$ resp. 
$\widehat{\lambda}_{k,j}(\underline{\zeta})$ the supremum of 
$\eta\in{\mathbb{R}}$ for which the system 
\begin{eqnarray}  
\vert x\vert &\leq& X   \label{eq:hund}  \\
\max_{1\leq j\leq k} \vert \zeta_{j}x-y_{j}\vert &\leq& X^{-\eta}  \label{eq:katz}
\end{eqnarray}
has (at least) $j$ linearly independent solutions $(x,y_{1},y_{2},\ldots, y_{k})\in{\mathbb{Z}^{k+1}}$ 
for arbitrarily large real $X$ resp. for all $X\geq X_{0}$. Note that for $k=1$, 
$\underline{\zeta}=\zeta$ the identity $\lambda_{1,1}(\zeta)+1=\mu(\zeta)$ holds. 
We should mention that the classical notation, used for example in \cite{ss}, 
for $\lambda_{k,1}(\underline{\zeta})$ resp. $\widehat{\lambda}_{k,1}(\underline{\zeta})$
is $\omega$ resp. $\widehat{\omega}$, where the vector $\underline{\zeta}$
is considered fixed and its dimension is omitted in the notation. 
There is no actual standard notation for the generalization $j\geq 2$, in~\cite{schl}
the notation $\omega_{j}$ resp. $\widehat{\omega}_{j}$ is used. 
We use the present notation because it allows convenient relation to the successive 
power case we will introduce in Section~\ref{powercase}.

Clearly, for all $\underline{\zeta}\in{\mathbb{R}^{k}}$ the inequalities
\begin{eqnarray}
\lambda_{k,1}(\underline{\zeta})&\geq&\lambda_{k,2}(\underline{\zeta})\geq 
\cdots\geq \lambda_{k,k+1}(\underline{\zeta})\geq 0,   \label{eq:dieerst} \\ 
\widehat{\lambda}_{k,1}(\underline{\zeta})&\geq& \widehat{\lambda}_{k,2}(\underline{\zeta})\geq \cdots
\geq \widehat{\lambda}_{k,k+1}(\underline{\zeta})\geq 0,   \label{eq:diezweit}
\end{eqnarray}
and 
\[
\lambda_{k,j}(\underline{\zeta})\geq \widehat{\lambda}_{k,j}(\underline{\zeta}), \qquad 1\leq j\leq k+1
\]
hold. Furthermore, a generalization of Dirichlet's~Theorem 
asserts
\begin{equation} \label{eq:tijde}
\lambda_{k,1}(\underline{\zeta})\geq \widehat{\lambda}_{k,1}(\underline{\zeta})\geq \frac{1}{k}
\end{equation}
for all $\underline{\zeta}\in{\mathbb{R}^{k}}$. 
This can be found on page~19, Chapter~2.4 in~\cite{wald}. 
Again, there is actually equality in both inequalities in \eqref{eq:tijde} 
for Lebesgue almost all $\underline{\zeta}\in{\mathbb{R}^{k}}$, 
such as for all algebraic $\underline{\zeta}\in{\overline{\mathbb{Q}}^{k}}$ 
for which $\{1,\zeta_{1},\ldots,\zeta_{k}\}$ is $\mathbb{Q}$-linearly independent. 
The first assertion was established by Khintchine,
the latter is a consequence of Schmidt's Subspace~Theorem~\cite{schm}.
Define
\begin{eqnarray}
\chi_{k,1}=\chi_{k,2}&=&\frac{1}{k}, \qquad \qquad \chi_{k,3}=\chi_{k,4}=\cdots=\chi_{k,k+1}=0,  \label{eq:chi}  \\
\phi_{k,1}&=&\frac{1}{k}, \qquad\qquad \phi_{k,2}=\phi_{k,3}=\cdots=\phi_{k,k+1}=0.  \nonumber
\end{eqnarray}
For $\underline{\zeta}\in{\mathbb{R}^{k}}$ that satisfies the $\mathbb{Q}$-linear independence property above,
(14)-(19) in~\cite{schl} (where $\omega_{j}$ corresponds to the present $\lambda_{k,j}(\underline{\zeta})$)
translate into
\begin{eqnarray}
\chi_{k,j}\leq \lambda_{k,j}(\underline{\zeta})&\leq& \frac{1}{j-1}, \qquad 1\leq j\leq k+1,  \label{eq:2} \\
\phi_{k,j}\leq \widehat{\lambda}_{k,j}(\underline{\zeta})&\leq& \frac{1}{j}, \qquad\qquad 1\leq j\leq k,  \label{eq:3}\\
\phi_{k,k+1}\leq\widehat{\lambda}_{k,k+1}(\underline{\zeta})&\leq& \frac{1}{k}.  \label{eq:4}
\end{eqnarray}
Here we have put $1/0=\infty$, and we agree on this such as $1/\infty=0$ anywhere it appears. 
Individually, these bounds are best possible, as deduced in Corollary~9 in~\cite{schl}.   
In fact, the linear independence property is required only in \eqref{eq:3}.
Several other restrictions concerning the {\em joint} spectrum
of the quantities $\lambda_{k,j}$ resp. $\widehat{\lambda}_{k,j}$
are known, mostly inferred via Minkowski's second lattice point Theorem~\cite{minkowski}, 
see~\cite{ss}, \cite{ssch}. 

\subsection{Simultaneous approximation: The successive power case}  \label{powercase}
An interesting heavily studied special case is that $\underline{\zeta}$ as in Section~\ref{qli}
consists of the first successive powers of a real number $\zeta$, 
i.e. $\underline{\zeta}=(\zeta,\zeta^{2},\ldots,\zeta^{k})$.
We will mostly deal with this case in the sequel, so for simplicity for any pair of positive 
integers $k,j$ with $1\leq j\leq k+1$ we will write 
\[
\lambda_{k,j}(\zeta):=\lambda_{k,j}(\zeta,\zeta^{2},\ldots,\zeta^{k}), \qquad  
\widehat{\lambda}_{k,j}(\zeta):=\widehat{\lambda}_{k,j}(\zeta,\zeta^{2},\ldots,\zeta^{k}).
\]
Again we also assume the $\mathbb{Q}$-linear independence condition from the previous 
Section~\ref{qli} to hold, which has the natural interpretation 
that $\zeta$ is not algebraic of degree $\leq k$.
  
Lebesgue almost all $\zeta$ satisfy $\lambda_{k,j}(\zeta)=\widehat{\lambda}_{k,j}(\zeta)=1/k$ 
for all pairs $j,k$ with $1\leq j\leq k+1$. This
was established by Sprind\^zuk~\cite{sprindzuk} for $j=1$, 
combination with Minkowski's second lattice point Theorem readily yields the general assertion.
Generalizations of this metric result for $\underline{\zeta}=(\zeta_{1},\zeta_{2},\ldots,\zeta_{k})$
belonging to a wider class of algebraic curves in $\mathbb{R}^{k}$ can be found in~\cite{kleinbock}. 

Clearly all inequalities and bounds established in Section~\ref{qli} hold
for the special case of successive powers as well. 
However, the bounds \eqref{eq:2}-\eqref{eq:4}
are in general not optimal, particularly those related to the {\em uniform} approximation 
constants (with the ''hat'') differ significantly. 
This is a consequence of the upper bound
\begin{equation} \label{eq:piscis}
\widehat{\lambda}_{k,1}(\zeta)\leq \left\lceil \frac{k}{2}\right\rceil^{-1}
\end{equation}
valid for all $\zeta$ not algebraic of degree~$\leq \lceil k/2\rceil$
due to Davenport, Schmidt and Laurent~\cite{laurent}. 
Here as usual $\lceil \alpha\rceil$ denotes the smallest integer greater or equal to $\alpha\in{\mathbb{R}}$.
Indeed $\lceil k/2\rceil^{-1}$ sharpens the bound $1$ for $\widehat{\lambda}_{k,1}(\zeta)$ arising 
from \eqref{eq:3} vastly, and \eqref{eq:diezweit} further implies sharper restrictions on
the spectrum of $\widehat{\lambda}_{k,j}(\zeta)$ at least for some indices $2\leq j\leq k+1$ too. 

The spectrum of $\lambda_{k,j}(\zeta)$ among all real numbers $\zeta$ in dependence 
of $k$ has been studied as well, mostly for $j=1$. 
Bugeaud proved in Theorem~2 in \cite{bug} that the spectrum of $\lambda_{k,1}$ contains $[1,\infty]$,
a generalization due to the author is given in~\cite{joxy}.   
The natural conjecture already quoted in~\cite{bug} is the following.
\begin{conjecture}  \label{conj}
Let $k$ be a positive integer. The spectrum of $\lambda_{k,1}(\zeta)$ among 
all real numbers $\zeta$ not algebraic of degree $\leq k$ is $[1/k,\infty]$.
\end{conjecture}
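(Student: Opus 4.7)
The plan is to realize each value $\lambda\in[1/k,\infty]$ as $\lambda_{k,1}(\zeta)$ for some $\zeta$ that is not algebraic of degree $\leq k$. The endpoint $\lambda=\infty$ is covered by Liouville's number $L$ from \eqref{eq:a} (and in fact the results announced in the abstract of the present paper will give $\lambda_{k,1}(L)=\infty$ for every $k$), while the range $\lambda\in[1,\infty)$ is already the content of Theorem~2 in \cite{bug}. So the genuine content of the conjecture is the range $\lambda\in[1/k,1)$.

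For this range, my first attempt would be to mimic Bugeaud's construction and build $\zeta$ as the limit of a carefully chosen sequence of rationals $p_{n}/q_{n}$ with prescribed growth rate $\log q_{n+1}/\log q_{n}\to c$ for an appropriate $c=c(k,\lambda)$. The lower bound argument is classical: if $|\zeta-p_{n}/q_{n}|\leq q_{n}^{-\mu}$, then taking $x=q_{n}^{k}$ and $y_{j}=p_{n}^{j}q_{n}^{k-j}$ in \eqref{eq:hund}--\eqref{eq:katz} yields $|\zeta^{j}x-y_{j}|\ll q_{n}^{k-\mu}$, and therefore
\[
\lambda_{k,1}(\zeta)\geq \frac{\mu-k}{k}=\frac{\mu(\zeta)}{k}-1.
\]
Choosing $\mu=k(1+\lambda)$ and arranging $\mu(\zeta)=\mu$ exactly, this realizes any prescribed $\lambda\geq 1/k$ provided one can keep $\mu\geq 2$ during the construction, which holds throughout the conjectured range once $k\geq 1$.

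The main obstacle, on which the conjecture in fact stands or falls, is the matching upper bound $\lambda_{k,1}(\zeta)\leq\lambda$ when $\lambda<1$. One must show that no integer vector $(x,y_{1},\ldots,y_{k})$ produces a substantially better simultaneous approximation to $(\zeta,\zeta^{2},\ldots,\zeta^{k})$ than the $(q_{n}^{k},p_{n}q_{n}^{k-1},\ldots,p_{n}^{k})$ built above. This is delicate for two reasons: (i)~as $\lambda\downarrow 1/k$, Dirichlet's bound \eqref{eq:tijde} is essentially tight, leaving almost no room to exclude sporadic exceptional approximations; (ii)~good simultaneous approximations to $(\zeta,\ldots,\zeta^{k})$ can arise not only from rational approximations to $\zeta$, but also from evaluations of integer polynomials of degree $\leq k$ at $\zeta$ near algebraic $\alpha$ of degree $\leq k+1$, and these in principle can outperform the rational sources. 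Controlling the latter requires height-gap estimates in the style of the proof of \eqref{eq:piscis} in \cite{laurent}, typically implemented by an alternating construction that keeps each $p_{n}/q_{n}$ far from all algebraic numbers of bounded degree and height, with quantitatively sharp gaps between consecutive scales $q_{n}$ and $q_{n+1}$.

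A secondary line of attack, which I would pursue in parallel, is via the joint-spectrum machinery of \cite{ss,ssch}: combining Minkowski's second Theorem \cite{minkowski} with the Davenport--Schmidt--Laurent bound \eqref{eq:piscis} and the monotonicity chains \eqref{eq:dieerst}--\eqref{eq:diezweit} may reduce the conjecture to an attainability question for certain tuples $(\widehat{\lambda}_{k,1}(\zeta),\lambda_{k,1}(\zeta),\lambda_{k,k+1}(\zeta))$. Even in this reduced form the problem appears open, so I expect that a complete resolution of Conjecture~\ref{conj} will require either a genuinely new explicit construction tailored to the range $\lambda\in[1/k,1)$ or a strengthening of Schmidt's subspace-type tools that can bound $\lambda_{k,1}$ from above without also forcing $\lambda_{k,1}=1/k$.
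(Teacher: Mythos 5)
The statement you have been asked to address is a \emph{conjecture}, and the paper contains no proof of it: immediately after its statement the author notes that, apart from $k=1$, it is known only for $k=2$ (via Beresnevich, Dickinson, Vaughan and Velani \cite{bere}, \cite{vel}). There is therefore no ``paper's own proof'' to compare against, and your write-up is a research outline rather than a proof. Within that frame, your analysis of the structure of the problem is accurate: the endpoint $\lambda=\infty$ is supplied by Liouville numbers (cf.\ Corollary~\ref{koro} and Theorem~\ref{grundlage}); the range $[1,\infty)$ is Theorem~2 of \cite{bug}; and your lower bound argument is correct --- from $|\zeta-p_n/q_n|\leq q_n^{-\mu}$, the choice $x=q_n^k$, $y_j=p_n^jq_n^{k-j}$ gives $|\zeta^j x-y_j|\ll q_n^{k-\mu}$ and hence $\lambda_{k,1}(\zeta)\geq(\mu(\zeta)-k)/k$. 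One slip in wording: ``arranging $\mu(\zeta)=\mu$ exactly, this realizes any prescribed $\lambda$'' overstates what has been shown; the construction only gives $\lambda_{k,1}(\zeta)\geq\lambda$, as you in fact concede in the very next paragraph.

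The genuine gap is the one you yourself name, and it is precisely the gap in the literature: the matching upper bound $\lambda_{k,1}(\zeta)\leq\lambda$ in the range $\lambda\in[1/k,1)$. One must exclude good simultaneous approximants to $(\zeta,\dots,\zeta^k)$ arising from integer polynomials of degree $\leq k$ rather than from the rationals $p_n/q_n$, and no technique presently does this for $k\geq 3$. Your secondary line via \cite{ss}, \cite{ssch} and \eqref{eq:piscis} is a reasonable thing to investigate but, as you note, at best reformulates the problem as an attainability question that remains open. So the proposal is honest and well aimed, but it does not (and cannot, by these means alone) close the conjecture, and should not be framed as if it might; state explicitly up front that the conjecture is open and that what follows is a discussion of strategy.
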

Apart from $k=1$ this is only known for $k=2$
thanks to Beresnevich, Dickinson, Vaughan and Velani~\cite{bere},~\cite{vel}. 
For further problems and results in this manner see also~\cite{bug}.
We will return to the spectrum of $\lambda_{k,j}(\zeta), \widehat{\lambda}_{k,j}(\zeta)$ 
for fixed $k$ in Section~\ref{sek3}.

It follows from their definition that for fixed $\zeta$ and $j\geq 1$, 
the quantities $\lambda_{k,j}(\zeta), \widehat{\lambda}_{k,j}(\zeta)$ are 
non-increasing as $k$ increases, i.e. (with $\lambda_{0,1}(\zeta):=\infty, \widehat{\lambda}_{0,1}(\zeta):=\infty$)
\begin{eqnarray} 
\lambda_{j-1,j}(\zeta)&\geq& 
\lambda_{j,j}(\zeta)\geq \lambda_{j+1,j}(\zeta)\geq \cdots, \qquad\quad j\geq 1, \label{eq:trug}  \\
\widehat{\lambda}_{j-1,j}(\zeta)&\geq& \widehat{\lambda}_{j,j}(\zeta)
\geq \widehat{\lambda}_{j+1,j}(\zeta)\geq\cdots, \qquad\quad j\geq 1.   \label{eq:truge}
\end{eqnarray}  
For $j=1$, Lemma~1 in~\cite{bug} by Bugeaud states a result
somehow reverse to \eqref{eq:trug}, which in our notation says the following. 

\begin{theorem}[Bugeaud] \label{th}
For any positive integers $k$ and $n$, and any transcendental real number $\zeta$ we have
\[
\lambda_{kn,1}(\zeta)\geq \frac{\lambda_{n,1}(\zeta)-k+1}{k}.
\]
\end{theorem}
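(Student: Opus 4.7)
The plan is to lift a good simultaneous approximation of $(\zeta,\zeta^{2},\ldots,\zeta^{n})$ to one of $(\zeta,\zeta^{2},\ldots,\zeta^{kn})$ by raising the common denominator to the $k$-th power and forming suitable products of the given numerators to approximate the higher powers of $\zeta$. The case $\lambda_{n,1}(\zeta) \leq k-1$ is trivial by $\lambda_{kn,1}(\zeta) \geq 0$ from~\eqref{eq:dieerst}, so I fix any $\eta$ with $k-1<\eta<\lambda_{n,1}(\zeta)$ (with obvious modifications if $\lambda_{n,1}(\zeta)=\infty$). By the definition of $\lambda_{n,1}(\zeta)$ there exist arbitrarily large positive integers $x$ and integers $y_{1},\ldots,y_{n}$ with $|\zeta^{j}x-y_{j}|\leq x^{-\eta}$ for $1\leq j\leq n$. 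For each such $x$ I set $X:=x^{k}$, and for $1\leq m\leq kn$ I write $m=qn+r$ with $0\leq q\leq k-1$ and $1\leq r\leq n$, and define
\[
Y_{m} \;:=\; y_{n}^{q}\, y_{r}\, x^{k-q-1}.
\]
A direct rearrangement yields
\[
\zeta^{m}X-Y_{m} \;=\; x^{k-q-1}\bigl((\zeta^{n}x)^{q}(\zeta^{r}x)-y_{n}^{q}y_{r}\bigr).
\]

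To bound the bracket, set $a:=\zeta^{n}x$, $a':=y_{n}$, $b:=\zeta^{r}x$, $b':=y_{r}$, so that $|a-a'|,|b-b'|\leq x^{-\eta}$ while $|a|,|a'|,|b|,|b'|\ll x$ with implicit constants depending only on $k,n,\zeta$. Expanding
\[
a^{q}b-(a')^{q}b' \;=\; a^{q}(b-b') \;+\; b'\,(a-a')\sum_{i=0}^{q-1} a^{q-1-i}(a')^{i}
\]
and bounding each summand by $\ll x^{q-\eta}$ gives $|\zeta^{m}X-Y_{m}| \ll x^{k-1-\eta}$ uniformly in $m$. Since $X=x^{k}$, for any $\eta'<(\eta-k+1)/k$ and all sufficiently large $x$ this entails
\[
\max_{1\leq m\leq kn}|\zeta^{m}X-Y_{m}| \;\leq\; X^{-\eta'}.
\]

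As $(X,Y_{1},\ldots,Y_{kn})$ is nontrivial ($X=x^{k}\neq 0$) and $X$ takes arbitrarily large values, this produces infinitely many solutions witnessing $\lambda_{kn,1}(\zeta)\geq\eta'$; letting first $\eta'\nearrow (\eta-k+1)/k$ and then $\eta\nearrow \lambda_{n,1}(\zeta)$ concludes the argument. The main routine obstacle is the bookkeeping in the bound for $a^{q}b-(a')^{q}b'$, in particular controlling the geometric-type sum $\sum_{i=0}^{q-1}a^{q-1-i}(a')^{i}$ by a constant times $x^{q-1}$ uniformly in $q\leq k-1$; the transcendence of $\zeta$ enters only to guarantee that the infinite family of approximations underlying $\lambda_{n,1}(\zeta)$ is genuinely produced by infinitely many distinct $x$, so that the resulting $X=x^{k}$ are also unbounded.
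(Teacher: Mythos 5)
Your proof is correct, but there is nothing in the paper to compare it against: Theorem~\ref{th} is imported verbatim as Lemma~1 of Bugeaud's paper~\cite{bug}, and the present paper gives no proof of it. Your lifting argument is the standard one: setting $X=x^{k}$, decomposing $m=qn+r$, and taking $Y_{m}=y_{n}^{q}y_{r}x^{k-q-1}$ yields the identity $\zeta^{m}X-Y_{m}=x^{k-q-1}\bigl((\zeta^{n}x)^{q}(\zeta^{r}x)-y_{n}^{q}y_{r}\bigr)$, and your telescoping estimate on $a^{q}b-(a')^{q}b'$ gives $|\zeta^{m}X-Y_{m}|\ll x^{k-1-\eta}=X^{-(\eta-k+1)/k}$ uniformly in $m$; letting $\eta\nearrow\lambda_{n,1}(\zeta)$ finishes. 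One very minor remark: transcendence is not really needed to ensure the witnessing $x$ are unbounded — irrationality already forces that, since for fixed nonzero $x_{0}$ the quantity $\min_{y}|\zeta x_{0}-y|$ is bounded below, so once $X^{-\eta}$ drops below that bound the solution must use a new, larger $x$. The hypothesis in the theorem statement is there mainly to stay in the regime where these exponents behave as intended.
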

There is actually equality in case of $\lambda_{kn,1}(\zeta)>1$, as established 
by the author~\cite{schleischitz}.
If we put $n=1$ in Theorem~\ref{th} and let $\zeta$ be a Liouville number, we obtain
Corollary~2 in~\cite{bug}.
\begin{corollary}[Bugeaud]  \label{koro}
Let $\zeta$ be an irrational real number. We have
$\lambda_{k,1}(\zeta)=\infty$ for all positive integers $k$ if and only if $\lambda_{1,1}(\zeta)=\infty$,
i.e. $\zeta$ is a Liouville number.
\end{corollary}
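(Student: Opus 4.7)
The plan is to deduce Corollary~\ref{koro} directly from Theorem~\ref{th} by specializing to $n=1$, with no further ingredients required. I would organize the argument as two short implications, using the identity $\lambda_{1,1}(\zeta)+1=\mu(\zeta)$ recorded in Section~\ref{beginn} to translate between the condition that $\zeta$ is a Liouville number and the condition $\lambda_{1,1}(\zeta)=\infty$.

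First I would dispose of the easy ``only if'' direction. If $\lambda_{k,1}(\zeta)=\infty$ for every positive integer $k$, then in particular $\lambda_{1,1}(\zeta)=\infty$, hence $\mu(\zeta)=\infty$, so $\zeta$ is Liouville by definition. No approximation result is invoked here; one merely reads off the case $k=1$.

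For the converse, assume $\zeta$ is Liouville, so that $\lambda_{1,1}(\zeta)=\mu(\zeta)-1=\infty$. Fix an arbitrary positive integer $k$ and apply Theorem~\ref{th} with $n=1$ to obtain
\[
\lambda_{k,1}(\zeta) \;\geq\; \frac{\lambda_{1,1}(\zeta)-k+1}{k}.
\]
Under the paper's standing convention that $\infty$ dominates finite quantities, the numerator on the right-hand side equals $\infty$ and hence so does the quotient, which forces $\lambda_{k,1}(\zeta)=\infty$. As $k$ was arbitrary, this establishes $\lambda_{k,1}(\zeta)=\infty$ for every $k\geq 1$, completing the proof.

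There is essentially no obstacle once Theorem~\ref{th} is at our disposal: all the nontrivial analytic content has been absorbed into that statement, and the corollary is a one-line specialization. The only subtlety worth flagging is the handling of the symbol $\infty$ in the arithmetic on the right-hand side, but this is already built into the paper's conventions and does not affect the logical structure.
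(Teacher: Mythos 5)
Your argument is correct and follows exactly the route the paper indicates: the paper states that Corollary~\ref{koro} is obtained by setting $n=1$ in Theorem~\ref{th} and letting $\zeta$ be a Liouville number, which is precisely your nontrivial direction, while the other direction is the trivial specialization to $k=1$ that you also note. Nothing is missing and the handling of $\infty$ is consistent with the paper's stated conventions.
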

We shall utilize Corollary~\ref{koro} in the proof of Theorem~\ref{grundlage}.

\section{The spectrum of $\lambda_{k,j}(\zeta), \widehat{\lambda}_{k,j}(\zeta)$: known facts}  \label{sek3}

In this section we quote facts on the individual such as the joint spectrum of
the quantities $\lambda_{k,j}(\zeta), \widehat{\lambda}_{k,j}(\zeta)$.
Let us restrict to $j=1$ first.
The restriction \eqref{eq:piscis} shows that for not too small values of $\lambda_{k,1}(\zeta)$,
there are far more stringent restrictions for the joint spectrum of
($\lambda_{k,1}(\zeta),\widehat{\lambda}_{k,1}(\zeta)$) than \eqref{eq:tijde}.
Another result that affirms this arises from the following Theorem, which we deduce at once 
from the Theorems~1.6,~1.12 in~\cite{schleischitz} due to the author.

\begin{theorem}[S, 2014]   \label{lemma6}
Let $\zeta\in{\mathbb{R}\setminus{\mathbb{Q}}}$. For any positive integer $k$, we have
\[
 \widehat{\lambda}_{k,1}(\zeta)\leq \max\left\{\frac{1}{\lambda_{1,1}(\zeta)},\frac{1}{k}\right\}.
\]
Moreover, if $\lambda_{k,1}(\zeta)>1$, then $\lambda_{1,1}(\zeta)=k\lambda_{k,1}(\zeta)+k-1>2k-1\geq k$.
\end{theorem}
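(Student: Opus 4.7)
My plan is to prove the two assertions separately, both by analysing the geometry of integer points near the curve $(\zeta,\zeta^2,\ldots,\zeta^k)$ in the presence of a very good rational approximation $p/q$ to $\zeta$.

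For the second assertion, the inequality $\lambda_{1,1}(\zeta)\le k\lambda_{k,1}(\zeta)+k-1$ is just a rearrangement of Theorem~\ref{th} with $n=1$. To establish the reverse inequality under the hypothesis $\lambda_{k,1}(\zeta)>1$, I would pick a sequence $(x_n,y_{n,1},\ldots,y_{n,k})$ realising $\lambda_{k,1}(\zeta)$ and argue that, once $\lambda_{k,1}(\zeta)>1$, the first two coordinates are essentially forced to come from a rational $p_n/q_n$ with $x_n\asymp q_n^k$ and $y_{n,1}\asymp q_n^{k-1}p_n$. Indeed, if $y_{n,1}/x_n=p_n/q_n$ in lowest terms, the remaining inequalities $|x_n\zeta^i-y_{n,i}|\le X_n^{-\lambda_{k,1}(\zeta)}$ combined with the Taylor expansion of $\zeta^i$ around $p_n/q_n$ force $q_n^i\mid x_n$ for $i\le k$, and a height comparison yields $X_n\asymp q_n^k$. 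Reading off the first inequality gives $|q_n\zeta-p_n|\lesssim q_n^{-(k\lambda_{k,1}(\zeta)+k-1)}$, whence $\lambda_{1,1}(\zeta)\ge k\lambda_{k,1}(\zeta)+k-1$. The bounds $\lambda_{1,1}(\zeta)>2k-1\ge k$ then follow at once from $\lambda_{k,1}(\zeta)>1$ and $k\ge 1$.

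For the first assertion, there is nothing to prove when $\widehat{\lambda}_{k,1}(\zeta)\le 1/k$, so set $\eta:=\widehat{\lambda}_{k,1}(\zeta)$, assume $\eta>1/k$, and aim at $\eta\le 1/\lambda_{1,1}(\zeta)$. Fix $\lambda<\lambda_{1,1}(\zeta)$ and pick $p/q$ in lowest terms with $|q\zeta-p|\le q^{-\lambda}$ and $q$ arbitrarily large; the task is to rule out any nonzero solution $(x,y_1,\ldots,y_k)$ to $|x|\le X$, $\max_i|x\zeta^i-y_i|\le X^{-\eta}$ for some $X$ in a suitable interval. The triangle inequality applied to $|x\zeta-y_1|\le X^{-\eta}$ and $|q\zeta-p|\le q^{-\lambda}$ yields $|y_1q-xp|\le qX^{-\eta}+Xq^{-\lambda}$, which is strictly less than $1$ whenever $X\in((2q)^{1/\eta},q^{\lambda}/2)$, forcing $y_1q=xp$ and hence $q\mid x$. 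Writing $x=mq$ and expanding $q\zeta=p+\varepsilon$ with $|\varepsilon|\le q^{-\lambda}$ gives $x\zeta^2=mp^2/q+O(Xq^{-\lambda-1})$; the rational $mp^2/q$ lies at distance at least $1/q$ from $\mathbb{Z}$ unless $q\mid m$, and on the same range of $X$ this forces $q\mid m$, i.e.\ $q^2\mid x$. Iterating identically through $i=3,\ldots,k$ yields $q^k\mid x$ and hence $|x|\ge q^k$. Consequently no solution exists for $X$ in $I_q:=((2q)^{1/\eta},\min(q^{\lambda}/2,q^k))$, and $I_q$ is non-empty precisely when $1/\eta<\min(\lambda,k)$, i.e.\ $\eta>\max(1/\lambda,1/k)$. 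Letting $\lambda\to\lambda_{1,1}(\zeta)$ gives the desired bound.

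The main obstacle is the inductive divisibility analysis in the first assertion: one must check that the error term $|x\zeta^i-m_{i-1}p^i/q|$ remains of size $O(Xq^{-\lambda-1})$ uniformly in $i\le k$, and that the divisibility $q\mid m_{i-1}$ really does propagate from stage $i$ to stage $i+1$. A parallel difficulty arises for the second assertion, where one must justify that any near-optimal simultaneous approximation is, up to bounded factors, the $k$-th power lift of a one-dimensional approximation; this is precisely where the hypothesis $\lambda_{k,1}(\zeta)>1$ is essential, as it rules out degenerate configurations that would otherwise appear on the algebraic curve.
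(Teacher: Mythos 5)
The paper does not actually give a proof of this theorem: it says it is "deduced at once from the Theorems~1.6, 1.12 in~\cite{schleischitz}", i.e.\ the author's own prior paper, so what you are really being asked to reproduce is the content of those cited results. Your blind proposal does precisely that, by a self-contained argument that is almost certainly the same mechanism hidden in the citation, namely a divisibility-propagation argument via Taylor expansion of $\zeta^i$ around a very good rational $p/q$.

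Both halves of your sketch are sound. For the uniform bound: given $p/q$ in lowest terms with $\vert q\zeta-p\vert\le q^{-\lambda}$, the triangle-inequality step forces $q\mid x$ on the range $(2q)^{1/\eta}<X<q^{\lambda}/2$, and the inductive step (writing $x=m_iq^i$, expanding $x\zeta^{i+1}=m_i p^{i+1}/q+O(Xq^{-\lambda-1})$, and using $\gcd(p,q)=1$ to conclude that $q\nmid m_i$ would place $m_ip^{i+1}/q$ at distance $\ge 1/q$ from $\mathbb{Z}$) propagates cleanly since $p\asymp q$. You should note that the implied constant in the $O$-term grows with $i$, so the right endpoint of $I_q$ should be $c(k,\zeta,\lambda)\,q^{\lambda}$ rather than literally $q^{\lambda}/2$; this does not affect the nonemptiness criterion $1/\eta<\min(\lambda,k)$ for large $q$. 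One also needs the standard care that $\widehat{\lambda}_{k,1}(\zeta)$ is a supremum, so the argument should be run with an arbitrary $\eta>\max(1/\lambda,1/k)$ rather than literally $\eta=\widehat{\lambda}_{k,1}(\zeta)$; your phrasing is slightly loose there but the logic is right, and the proof finally shows $\widehat{\lambda}_{k,1}(\zeta)\le\inf_{\lambda<\lambda_{1,1}(\zeta)}\max(1/\lambda,1/k)=\max(1/\lambda_{1,1}(\zeta),1/k)$.

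For the second assertion, the $\le$ direction is correctly read off from Theorem~\ref{th} with $n=1$, and for $\ge$ your outline is correct but the phrase "a height comparison yields $X_n\asymp q_n^k$" claims more than you need and more than is generally true: nothing prevents $|x_n|\gg q_n^k$. What the argument actually delivers, and all that is required, is the one-sided bound $q_n^k\mid x_n$, hence $|x_n|\ge q_n^k$. Plugging $d_n=|x_n|/q_n$ and $|x_n|\ge q_n^k$ into $\vert q_n\zeta-p_n\vert=d_n^{-1}\vert x_n\zeta-y_{n,1}\vert\le q_n\vert x_n\vert^{-\lambda-1}$ gives $\vert q_n\zeta-p_n\vert\le q_n^{-(k\lambda+k-1)}$, which is what you state; one should also observe that $q_n\to\infty$ (if $q_n$ were bounded then $\vert x_n\zeta-y_{n,1}\vert\ge c\,|x_n|/q_n$ would contradict $\lambda>1$). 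With these small repairs your route is correct and, in effect, reconstructs the proof the paper outsources to~\cite{schleischitz}.
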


We are again particularly interested in the consequences for Liouville numbers.
The following corollary is basically contained in Corollary~5.2 in~\cite{schleischitz}.

\begin{corollary} \label{wiedenn}
Let $k$ be a positive integer.
Let $\zeta\in{\mathbb{R}\setminus{\mathbb{Q}}}$ such that $\lambda_{1,1}(\zeta)\geq k$, 
which is in particular true if $\lambda_{k,1}(\zeta)>1$. Then
\[
 \widehat{\lambda}_{k,1}(\zeta)= \frac{1}{k}.
\]
In particular, if $\zeta$ is a Liouville number, then $ \widehat{\lambda}_{k,1}(\zeta)= 1/k$
simultaneously for all $k$. 
\end{corollary}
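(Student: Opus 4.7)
The plan is to combine the upper bound provided by Theorem~\ref{lemma6} with the lower bound coming from Dirichlet's theorem in \eqref{eq:tijde}; both are available directly, so no new estimate is needed.

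First, I would observe that the hypothesis $\lambda_{1,1}(\zeta)\geq k$ immediately yields $1/\lambda_{1,1}(\zeta)\leq 1/k$, so the maximum in the upper bound
\[
\widehat{\lambda}_{k,1}(\zeta)\leq \max\left\{\frac{1}{\lambda_{1,1}(\zeta)},\frac{1}{k}\right\}
\]
of Theorem~\ref{lemma6} evaluates to $1/k$. The matching lower bound $\widehat{\lambda}_{k,1}(\zeta)\geq 1/k$ is precisely the generalized Dirichlet inequality \eqref{eq:tijde}. Combining these gives $\widehat{\lambda}_{k,1}(\zeta)=1/k$, which is the desired equality.

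Next I would justify the parenthetical sufficient condition: if $\lambda_{k,1}(\zeta)>1$, then the second assertion of Theorem~\ref{lemma6} yields $\lambda_{1,1}(\zeta)=k\lambda_{k,1}(\zeta)+k-1>2k-1\geq k$, so the standing hypothesis $\lambda_{1,1}(\zeta)\geq k$ is automatically satisfied and the previous paragraph applies.

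Finally, for the Liouville case one simply notes that $\zeta$ being a Liouville number means $\mu(\zeta)=\infty$, which via the identity $\lambda_{1,1}(\zeta)+1=\mu(\zeta)$ recorded in Section~\ref{beginn} gives $\lambda_{1,1}(\zeta)=\infty\geq k$ for every positive integer $k$. Hence $\widehat{\lambda}_{k,1}(\zeta)=1/k$ holds simultaneously in $k$. There is no genuine obstacle here since Theorem~\ref{lemma6} already encapsulates the hard analytic input; the only point requiring care is making sure the upper bound of Theorem~\ref{lemma6} is applied under the weakest hypothesis, namely $\lambda_{1,1}(\zeta)\geq k$, rather than under the stronger condition $\lambda_{k,1}(\zeta)>1$.
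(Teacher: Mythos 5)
Your proof is correct and matches the paper's own argument: the paper also derives the equality by combining the upper bound of Theorem~\ref{lemma6} with the lower bound from the Dirichlet inequality \eqref{eq:tijde}, and your treatment of the parenthetical sufficient condition and the Liouville case is the natural way to fill in the remaining details.
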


\begin{proof}
Combination of \eqref{eq:tijde} and Theorem~\ref{lemma6}.
\end{proof}

We should mention that for $k=2$, 
the maximum value of $\widehat{\lambda}_{2,1}(\zeta)$ among 
all $\zeta$ not algebraic of degree $\leq 2$ is known. It is attained for
so called extremal numbers $\zeta$, which satisfy
$\lambda_{2,1}(\zeta)=1, \widehat{\lambda}_{2,1}(\zeta)=\gamma:=(\sqrt{5}-1)/2$, see~\cite{roy}.
Besides, extremal numbers provide explicit vectors $\underline{\zeta}=(\zeta,\zeta^{2})$ where equality holds
in inequality (1.21) in~\cite{ssch} for $k=2$, which is in our notation
\begin{equation} \label{eq:sumsch}
\lambda_{k,1}(\underline{\zeta})\geq 
\frac{\widehat{\lambda}_{k,1}(\underline{\zeta})^{2}+(k-2)\widehat{\lambda}_{k,1}(\underline{\zeta})}
{(k-1)(1-\widehat{\lambda}_{k,1}(\underline{\zeta}))}.
\end{equation}
Note that \eqref{eq:sumsch} is valid for any $k\geq 2$ and $\underline{\zeta}\in{\mathbb{R}^{k}}$ 
which is $\mathbb{Q}$-linearly independent together with $1$. More generally,
an extremal number induces a certain geometric pattern which Summerer and Schmidt call a regular graph, 
see~\cite{sums}. See also their papers \cite{ss}, \cite{ssch}
such as Roy's recent papers \cite{royyyy}, \cite{roy5}, \cite{droy} for the (joint) spectrum of
$\lambda_{k,j}(\underline{\zeta}), \widehat{\lambda}_{k,j}(\underline{\zeta})$
in the $\mathbb{Q}$-linearly independent case of Section~\ref{qli}.

Observe that for $k=2$ the estimate \eqref{eq:piscis} would only yield the trivial 
upper bound $1$ from \eqref{eq:3} that holds for any $\underline{\zeta}$ under the linear 
independence assumption, which is significantly larger than $\gamma\approx 0.6180$. 
For $k=3$, the bound $1/2$ from \eqref{eq:piscis} was improved as well by Roy~\cite{roy2}, however
the best value remains unknown. For $k\geq 4$, no improvements concerning
the spectrum of the constants $\lambda_{k,1}(\zeta), \widehat{\lambda}_{k,1}(\zeta)$
in this spirit have been made.

The quantities $\lambda_{k,j}(\zeta), \widehat{\lambda}_{k,j}(\zeta)$ for
$2\leq j\leq k+1$ remain very poorly studied.
The remainder of this paper is devoted
to this question in the special case of Liouville numbers~$\zeta$.
It will turn out in Theorem~\ref{grundlage}
that for Liouville numbers all uniform constants $\widehat{\lambda}_{k,j}(\zeta)$
can be explicitly determined. Moreover, Theorem~\ref{mainth} will suggest Conjecture~\ref{konjw} 
on the (individual) spectrum of each $\lambda_{k,j}(\zeta)$ for $\zeta$ not algebraic of
degree $\leq k$.

\section{Formulation of the main results} \label{sek4}

\subsection{The general case}

The concern of the first theorem is to determine/bound 
the classic approximation constants for arbitrary Liouville numbers.

\begin{theorem}  \label{grundlage}
Let $\zeta$ be a Liouville number. For any positive integer $k$, we have
\begin{eqnarray}
\lambda_{k,1}(\zeta)&=&\infty,     \label{eq:w} \\
\frac{1}{k}\leq \lambda_{k,j}(\zeta)&\leq& \frac{1}{j-1}, \qquad 2\leq j\leq k+1, \label{eq:x}  \\
\widehat{\lambda}_{k,1}(\zeta)&=&\frac{1}{k},    \label{eq:y}  \\
\widehat{\lambda}_{k,j}(\zeta)&=&0, \qquad\qquad 2\leq j\leq k+1.  \label{eq:z}
\end{eqnarray}
\end{theorem}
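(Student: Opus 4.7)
The plan separates the four parts of Theorem~\ref{grundlage} into pieces following directly from previously cited results and those needing new input. First, \eqref{eq:w} is Corollary~\ref{koro}; the upper estimate $\lambda_{k,j}(\zeta)\leq 1/(j-1)$ in \eqref{eq:x} is the content of \eqref{eq:2}; the identity \eqref{eq:y} is Corollary~\ref{wiedenn} (valid since a Liouville number has $\lambda_{1,1}(\zeta)=\infty\geq k$); and the lower bound $\widehat{\lambda}_{k,j}(\zeta)\geq 0$ in \eqref{eq:z} is $\phi_{k,j}=0$ in \eqref{eq:3}. What remains is $\lambda_{k,j}(\zeta)\geq 1/k$ in \eqref{eq:x} for $3\leq j\leq k+1$ (the case $j=2$ is $\chi_{k,2}=1/k$ in \eqref{eq:2}) and $\widehat{\lambda}_{k,j}(\zeta)\leq 0$ in \eqref{eq:z}. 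I would treat both via the successive minima of
\[
C(X)=\bigl\{(x,y_1,\ldots,y_k)\in\mathbb{R}^{k+1}\colon |x|\leq X,\ \max_{1\leq m\leq k}|\zeta^m x-y_m|\leq X^{-1/k}\bigr\}
\]
relative to $\mathbb{Z}^{k+1}$, denoted $\lambda_1(X)\leq\cdots\leq\lambda_{k+1}(X)$, together with $\alpha_j(X):=\log\lambda_j(X)/\log X$. The needed input is Minkowski's second theorem $\prod_i\lambda_i(X)\asymp 1$ and the a~priori bound $\lambda_j(X)\leq C'X^{1/k}$ from the standard basis of $\mathbb{Z}^{k+1}$. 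A routine rescaling---writing $C_\eta(X)=\{|x|\leq X,\ \max|\zeta^m x-y_m|\leq X^{-\eta}\}$ as a dilation of $C(X')$ with $X'=X^{k(1+\eta)/(k+1)}$---yields the standard identifications
\[
\lambda_{k,j}(\zeta)=\frac{1/k-\liminf_{X\to\infty}\alpha_j(X)}{1+\liminf_{X\to\infty}\alpha_j(X)},\qquad
\widehat{\lambda}_{k,j}(\zeta)=\frac{1/k-\limsup_{X\to\infty}\alpha_j(X)}{1+\limsup_{X\to\infty}\alpha_j(X)}.
\]
Under these, \eqref{eq:y} becomes $\limsup\alpha_1(X)=0$ and \eqref{eq:w} becomes $\liminf\alpha_1(X)=-1$.

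For the bound $\lambda_{k,j}(\zeta)\geq 1/k$, the formula above reduces the problem to $\liminf_X\alpha_j(X)\leq 0$ for every $j\leq k+1$. I would pick a sequence $X_\nu\to\infty$ along which $\alpha_1(X_\nu)\to 0$, which exists because $\limsup\alpha_1=0$. Using $\lambda_i(X_\nu)\geq\lambda_1(X_\nu)$ and $\prod_i\lambda_i(X_\nu)\leq C$, the $j$-th minimum satisfies $\lambda_j(X_\nu)\leq C\lambda_1(X_\nu)^{-k}$, so $\alpha_j(X_\nu)\leq -k\alpha_1(X_\nu)+o(1)\to 0$, yielding $\liminf_X\alpha_j(X)\leq 0$.

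For the bound $\widehat{\lambda}_{k,j}(\zeta)\leq 0$ with $j\geq 2$, I would reduce to $\limsup_X\alpha_j(X)\geq 1/k$ and pick $X_\mu\to\infty$ with $\alpha_1(X_\mu)\to -1$, which exists by $\liminf\alpha_1=-1$. Minkowski's two-sided bound gives $\prod_{i\geq 2}\lambda_i(X_\mu)\asymp X_\mu^{1+o(1)}$, and combining this with $\lambda_i(X_\mu)\leq C'X_\mu^{1/k}$ for $i>j$ yields
\[
\lambda_2(X_\mu)\cdots\lambda_j(X_\mu)\;\geq\;\frac{\prod_{i\geq 2}\lambda_i(X_\mu)}{(C'X_\mu^{1/k})^{k+1-j}}\;\gg\;X_\mu^{(j-1)/k-o(1)}.
\]
Monotonicity $\lambda_2\leq\cdots\leq\lambda_j$ forces $\lambda_j(X_\mu)^{j-1}\gg X_\mu^{(j-1)/k-o(1)}$, hence $\alpha_j(X_\mu)\geq 1/k-o(1)$; combined with $\alpha_j\leq 1/k+o(1)$ this gives $\limsup_X\alpha_j(X)=1/k$ and therefore $\widehat{\lambda}_{k,j}(\zeta)=0$. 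The main technical step I expect is the derivation of the identification formulas above, i.e.\ the bookkeeping in writing $C_\eta(X)$ as a dilation of $C(X')$; once that is in place, both remaining bounds are pure applications of Minkowski's second theorem at the two extremes of $\lambda_1(X)$ mandated by the Liouville hypothesis.
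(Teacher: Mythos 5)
Your proof is correct, and it follows the same skeleton as the paper: \eqref{eq:w} from Corollary~\ref{koro}, \eqref{eq:y} from Corollary~\ref{wiedenn}, and the upper bound in \eqref{eq:x} from \eqref{eq:2} are identical citations. Where you diverge is in establishing $\lambda_{k,j}(\zeta)\geq 1/k$ and $\widehat\lambda_{k,j}(\zeta)=0$ for $j\geq 2$. The paper packages exactly these two implications into Lemmas~\ref{helplemma} and \ref{helplemma2} (stated for arbitrary $\underline\zeta\in\mathbb{R}^k$), whose proofs quote the Schmidt--Summerer inequalities \eqref{eq:altespaper}, \eqref{eq:endlix}, \eqref{eq:freckle} together with the correspondence \eqref{eq:alt}; you instead rederive the needed consequences of those inequalities from scratch, by picking sequences $X_\nu$, $X_\mu$ along which $\alpha_1$ realizes its $\limsup=0$ and $\liminf=-1$ and applying the two-sided Minkowski bound $\prod_i\lambda_i(X)\asymp 1$ together with the a~priori estimate $\lambda_i(X)\leq C'X^{1/k}$. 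Both routes rest on the same two pillars --- Minkowski's second theorem and the identification $(1+\lambda_{k,j})(1+\underline\psi_{k,j})=(1+\widehat\lambda_{k,j})(1+\overline\psi_{k,j})=(k+1)/k$ (your ``identification formulas'', which are precisely \eqref{eq:alt} once one observes $\psi_{k,j}(Q)=\alpha_j(Q)$) --- so the difference is expository: yours is more self-contained and avoids quoting \cite{ss},\cite{ssch}, at the cost of reproving special cases of inequalities (1.6a), (1.10), (1.13) that are already on record there. The details of your two Minkowski computations check out (in particular $\lambda_j\leq C\lambda_1^{-k}$ via $\prod_{i\neq j}\lambda_i\geq\lambda_1^k$, and $\lambda_2\cdots\lambda_j\gg X_\mu^{(j-1)/k-o(1)}$ via dividing out the trivial bound for $i>j$).
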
  

\subsection{A special class of Liouville numbers} 

We provide a class of Liouville numbers $\zeta$, for which all approximation constants 
$\lambda_{k,j}(\zeta), \widehat{\lambda}_{k,j}(\zeta)$ can be determined explicitly
simultaneously for all $k$. The weakest assumptions on $\zeta$ for our methods to work
will be those in Theorem~\ref{mainth}. Various specializations will be given in Section~\ref{specgen}. 

\begin{theorem}\label{mainth}
Let $k$ be a positive integer. Further let $q_{1},q_{2},\ldots$ be positive integers
such that $q_{l}\vert q_{l+1}$ and $q_{l}<q_{l+1}$ for all $l\geq 1$. 
Suppose that for a positive integer $k$
and all $C>0$ there exists $n=n(C)$ such that
\begin{equation} \label{eq:schwaechst}
 \frac{\log q_{n+1}}{\log q_{n}} > C, \qquad \frac{\log q_{n+2}}{\log q_{n+1}} > k+1.
\end{equation}
Further set
\[
\zeta=\sum_{l\geq 1} \frac{1}{q_{l}}.
\]
Then we have
\begin{eqnarray*}
 \lambda_{k,j}(\zeta)&=& \frac{1}{j-1}, \qquad 1\leq j\leq k+1,  \\
 \widehat{\lambda}_{k,1}(\zeta)&=& \frac{1}{k},                                \\
 \widehat{\lambda}_{k,j}(\zeta)&=& 0, \qquad 2\leq j\leq k+1.
\end{eqnarray*}
\end{theorem}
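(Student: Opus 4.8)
The plan is to prove the three groups of equalities in turn, reducing as much as possible to the general Theorem~\ref{grundlage} and then supplying the extra input coming from the very lacunary sequence $(q_l)$. First observe that $\zeta=\sum_{l\geq 1}q_l^{-1}$ is a Liouville number: the partial sums $p_n/q_n$ with $p_n=\sum_{l\le n}q_n/q_l\in\mathbb{Z}$ (using $q_l\mid q_n$) satisfy $|\zeta-p_n/q_n|=\sum_{l>n}q_l^{-1}\asymp q_{n+1}^{-1}$, and since $\log q_{n+1}/\log q_n$ is unbounded by \eqref{eq:schwaechst}, $\mu(\zeta)=\infty$. Moreover $\zeta$ is transcendental (hence not algebraic of degree $\le k$ for any $k$), so the $\mathbb{Q}$-linear independence hypothesis from Section~\ref{qli} holds for every $k$. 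Consequently Theorem~\ref{grundlage} already gives $\lambda_{k,1}(\zeta)=\infty=1/0$, $\widehat{\lambda}_{k,1}(\zeta)=1/k$, and $\widehat{\lambda}_{k,j}(\zeta)=0$ for $2\le j\le k+1$. Thus the entire content of Theorem~\ref{mainth} that is \emph{not} immediate from Theorem~\ref{grundlage} is the lower bound $\lambda_{k,j}(\zeta)\ge 1/(j-1)$ for $2\le j\le k+1$; the matching upper bound $\lambda_{k,j}(\zeta)\le 1/(j-1)$ is \eqref{eq:x}.

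The core of the proof is therefore to exhibit, for each $j$ with $2\le j\le k+1$ and each $\eta<1/(j-1)$, infinitely many integer vectors $\underline{X}=(x,y_1,\dots,y_k)$ that are $j$ linearly independent and satisfy \eqref{eq:hund}--\eqref{eq:katz} with exponent $\eta$ for arbitrarily large $X$. The natural construction uses the denominators $q_n$ (for suitable $n$) together with their first few powers: I would take $x=q_n^{i}$ for $i=0,1,\dots,j-1$, and for each such $x$ choose $y_m$ to be the nearest integer to $\zeta^m q_n^{i}$. Since $\zeta$ is extremely well approximated by $p_n/q_n$, one has $|\zeta^m q_n^{i}-(\text{nearest integer})|$ controlled by a power of $|\zeta-p_n/q_n|\asymp q_{n+1}^{-1}$, while the height is $X\asymp q_n^{j-1}$ in the worst case $i=j-1$. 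The hypothesis $\log q_{n+1}/\log q_n>C$ with $C$ arbitrarily large forces $q_{n+1}\ge q_n^{C}\ge X^{C/(j-1)}$, so the approximation quality $X^{-\eta}$ is met for any fixed $\eta$ once $C$ is large, yielding all $j$ vectors $q_n^{0},q_n^{1},\dots,q_n^{j-1}$ as a common solution set; these are linearly independent because their $x$-coordinates $1,q_n,\dots,q_n^{j-1}$ are distinct nonzero integers (more precisely, the corresponding vectors are linearly independent as their projections to the $x$-coordinate already are, after a standard argument that the $y$-coordinates are determined). The second condition in \eqref{eq:schwaechst}, $\log q_{n+2}/\log q_{n+1}>k+1$, is what guarantees there is no "interference" from the next denominator $q_{n+1}$: it ensures that at the relevant scale $X\asymp q_n^{j-1}$ (and more generally up to $X\asymp q_{n+1}$, which is $\ge q_n^{C}$) the only good approximations come from powers of $q_n$, so that we do not accidentally get \emph{more} independent solutions than claimed and the upper bounds for $\lambda_{k,j}$ and the vanishing of $\widehat\lambda_{k,j}$ are consistent; it also underlies the uniform (hat) statements, which however we already have from Theorem~\ref{grundlage}.

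The main obstacle I anticipate is the bookkeeping showing that the candidate vectors are genuinely $j$ linearly independent and that no spurious extra independent solutions appear in the window $[X_0,\,q_{n+1}^{1-\varepsilon}]$, i.e. verifying sharpness of the count $j$ rather than just a lower bound on the number of solutions. The linear independence of $(q_n^{i},\,\lfloor\zeta q_n^{i}\rceil,\dots,\lfloor\zeta^k q_n^{i}\rceil)$ for $i=0,\dots,j-1$ follows from a Vandermonde-type argument on the first two coordinates once $j\le k+1$, but one must be careful that these are the \emph{only} contributions to the successive minima at that scale, which is exactly where the gap condition $\log q_{n+2}/\log q_{n+1}>k+1$ enters: between consecutive "clusters" of solutions built from $q_n$ and from $q_{n+1}$ there is a long range of $X$ where the lattice of approximations to $(\zeta,\dots,\zeta^k)$ has its first $j$ minima realized precisely by the powers $q_n^{0},\dots,q_n^{j-1}$ and the $(j{+}1)$-st minimum is large. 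Combining this picture with Minkowski's second theorem, as in the references \cite{ss},~\cite{ssch}, pins down $\lambda_{k,j}(\zeta)=1/(j-1)$ exactly and confirms $\widehat{\lambda}_{k,j}(\zeta)=0$ for $j\ge 2$, completing the proof.
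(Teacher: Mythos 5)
Your reduction is sound in its outline: $\zeta$ is Liouville, Theorem~\ref{grundlage} delivers $\lambda_{k,1}=\infty$, $\widehat{\lambda}_{k,1}=1/k$, $\widehat{\lambda}_{k,j}=0$ ($j\ge 2$), and the upper bound $\lambda_{k,j}\le 1/(j-1)$, so the only new content is the lower bound $\lambda_{k,j}(\zeta)\ge 1/(j-1)$ for $2\le j\le k+1$. This matches the paper. However, the construction you propose for that lower bound does not work, for two separate reasons.

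First, with $x=q_n^i$ and $0\le i<m\le k$, the number $\zeta^m q_n^i$ is \emph{not} close to an integer. Writing $\zeta=p_n/q_n+\epsilon$ with $\epsilon\asymp q_{n+1}^{-1}$, the dominant term in $\zeta^m q_n^i$ is $p_n^m q_n^{i-m}$, a rational with denominator $q_n^{m-i}$ (not an integer), so $\Vert \zeta^m q_n^i\Vert$ stays bounded away from $0$; the error is \emph{not} ``controlled by a power of $q_{n+1}^{-1}$'' as you claim. For instance $x=1$ gives $\Vert\zeta^m\Vert\asymp 1$. To make all coordinates nearly integral for $m$ up to $k$, the factor $q_n^k$ must divide $x$ (as it does in the paper). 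Second, even after correcting this to $x=q_n^{k+i}$, the resulting integer vectors are essentially scalar multiples of one another: for $i\ge 0$ the rounded vector is $q_n^{i}\bigl(q_n^{k},\,p_n q_n^{k-1},\dots,p_n^k\bigr)$ up to a correction smaller than $1/2$ once $q_{n+1}>q_n^{k+1}$. These are linearly \emph{dependent}, and your assertion that distinct first coordinates force linear independence is simply false (cf.\ $(1,2)$ and $(2,4)$); the Vandermonde heuristic on the first two coordinates also collapses, since those pairs are all proportional to $(q_n,p_n)$.

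The paper avoids both failures by taking $x=E_{j,1}=q_n^{k}\,q_{n+1}^{j-1}$ for $j=1,\dots,k+1$: the factor $q_n^{k}$ clears all the denominators from $\zeta^m\approx (A/U+1/V)^m$, while the varying power of $q_{n+1}$ truncates the binomial expansion of $(A/U+1/V)^{m-1}$ at different places, producing genuinely independent vectors $E_1,\dots,E_{k+1}$. Linear independence is then verified by reducing the matrix $(E_{j,m})$ modulo $V=q_{n+1}$, which yields an upper triangular matrix with $U^{k}$ on the diagonal, nonsingular as soon as $U^{k(k+1)}<V$; the second condition $\log q_{n+2}/\log q_{n+1}>k+1$ is what makes the replacement of $\zeta$ by $A/U+1/V$ legitimate at these scales. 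In short, the crucial idea you are missing is to use \emph{two consecutive} denominators $q_n$ and $q_{n+1}$ simultaneously, not just powers of a single $q_n$. Finally, your closing paragraph about ruling out ``spurious extra independent solutions'' is not needed: only the lower bound for $\lambda_{k,j}$ requires a construction, the upper bound already coming from \eqref{eq:2}.
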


We will proof both Theorem~\ref{grundlage},~\ref{mainth} and give some remarks in 
Section~\ref{theproofs}, and will discuss some consequences in Section~\ref{specgen}.

\section{Preparatory results for the proofs}\label{sek2}

Parts of the proofs of the main new results in Section~\ref{sek4} can be generalized, so we will
state the more general versions in form of two Lemmas~\ref{helplemma},~\ref{helplemma2}. 
The proofs of those lemmas
are basically a consequence of Minkowski's second lattice point Theorem, hidden in the results
from~\cite{ss},~\cite{ssch} we utilize.

We introduce the functions $\psi_{k,j}$ and the derived quantities 
$\underline{\psi}_{k,j}, \overline{\psi}_{k,j}$ defined in~\cite{ss} with 
a slightly different notation (subindex $k$ omitted).
For fixed $\underline{\zeta}=(\zeta_{1},\ldots,\zeta_{k})\in{\mathbb{R}^{k}}$ 
and a real parameter $Q>1$, for $1\leq j\leq k+1$ define $\psi_{k,j}(Q)$ as the 
supremum of all values of $\nu\in{\mathbb{R}}$ such that the system 
\begin{eqnarray*}
\vert x\vert &\leq& Q^{1+\nu}   \\
\max_{1\leq j\leq k}\vert \zeta_{j} x-y_{j}\vert &\leq& Q^{-\frac{1}{k}+\nu} 
\end{eqnarray*}
has (at least) $j$ linearly independent solutions $(x,y_{1},\ldots,y_{k})\in{\mathbb{Z}^{k+1}}$. Further let 
\[
\underline{\psi}_{k,j}=\liminf_{Q\to\infty} \psi_{k,j}(Q), 
\quad \overline{\psi}_{k,j}=\limsup_{Q\to\infty} \psi_{k,j}(Q).
\]
The quantities obviously have the properties $\underline{\psi}_{k,j}\leq \overline{\psi}_{k,j}$ and
\begin{eqnarray}
-1\leq \underline{\psi}_{k,1}&\leq& \underline{\psi}_{k,2}\leq \cdots\leq \underline{\psi}_{k,k+1}\leq 1/k,
 \label{eq:oben}  \\
-1\leq \overline{\psi}_{k,1}&\leq& \overline{\psi}_{k,2}\leq \cdots\leq \overline{\psi}_{k,k+1}\leq 1/k.
\label{eq:unten}
\end{eqnarray}
Minkowski's first lattice point Theorem (or Dirichlet's Theorem) further implies
\begin{equation} \label{eq:mink}
\overline{\psi}_{k,1}\leq 0.
\end{equation}
Moreover, the generalization (13) in~\cite{schl} of (1.9) in~\cite{ss} for arbitrary $j$, 
shows that $\underline{\psi}_{k,j}$ and $\lambda_{k,j}=\lambda_{k,j}(\underline{\zeta})$ such
as $\overline{\psi}_{k,j}$ and $\widehat{\lambda}_{k,j}=\widehat{\lambda}_{k,j}(\underline{\zeta})$ 
are closely connected. In our notation the correspondence is given by
\begin{equation} \label{eq:alt}
(1+\lambda_{k,j})(1+\underline{\psi}_{k,j})=(1+\widehat{\lambda}_{k,j})(1+\overline{\psi}_{k,j})=\frac{k+1}{k}, 
\qquad 1\leq j\leq k+1.
\end{equation}
We will use \eqref{eq:alt} frequently in the proof of the following 
Lemmas~\ref{helplemma},~\ref{helplemma2}. Moreover, we will utilize
(1.10) in~\cite{ssch}, which in the notation of the present Section~\ref{sek2} states
\begin{align} 
j\underline{\psi}_{k,1}+(k+1-j)\overline{\psi}_{k,j+1}&\leq 0, \qquad 0\leq j\leq k,  \label{eq:endli} \\
j\overline{\psi}_{k,1}+(k+1-j)\underline{\psi}_{k,j+1}&\leq 0, \qquad 0\leq j\leq k.  \label{eq:endlix}
\end{align}
Further we need
\begin{equation} \label{eq:freckle}
\overline{\psi}_{k,1}+k\underline{\psi}_{k,k+1}\geq 0,
\end{equation}
which is the right estimate in (1.13) in~\cite{ssch}. Finally we need
inequality (1.6a) in~\cite{ss} for $i=1$, which is in the present notation
\begin{equation} \label{eq:altespaper}
\underline{\psi}_{k,1}+\overline{\psi}_{k,2}+\overline{\psi}_{k,3}+\cdots+\overline{\psi}_{k,k+1}\geq 0.
\end{equation}
Before we state and prove the lemmas,
we should add that the functions $\psi_{k,j}(Q)$ and thus the derived values 
$\underline{\psi}_{k,j}, \overline{\psi}_{k,j}$ have a natural geometric interpretation
as successive minima of special convex bodies with respect to a lattice.
However, we will not explicitly use the geometric view for our approaches and refer 
to~\cite{ss} for details.

\begin{lemma}\label{helplemma}
For any positive integer $k$ and any $\underline{\zeta}\in{\mathbb{R}^{k}}$ we have the equivalence
\begin{equation} \label{eq:spaeter}
\lambda_{k,1}(\underline{\zeta})=\infty \quad \Longleftrightarrow \quad \widehat{\lambda}_{k,2}(\underline{\zeta})=
\widehat{\lambda}_{k,3}(\underline{\zeta})=\cdots=\widehat{\lambda}_{k,k+1}(\underline{\zeta})=0.
\end{equation}
\end{lemma}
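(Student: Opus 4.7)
My plan is to translate everything into the $\underline{\psi}, \overline{\psi}$ parametrisation via the identity \eqref{eq:alt}. Under this identification, $\lambda_{k,1}(\underline{\zeta})=\infty$ is equivalent to $\underline{\psi}_{k,1}=-1$ (the minimum value allowed by \eqref{eq:oben}), and $\widehat{\lambda}_{k,j}(\underline{\zeta})=0$ is equivalent to $\overline{\psi}_{k,j}=1/k$ (the maximum value allowed by \eqref{eq:unten}). So the equivalence \eqref{eq:spaeter} reduces to showing
\[
\underline{\psi}_{k,1}=-1 \quad \Longleftrightarrow \quad \overline{\psi}_{k,2}=\overline{\psi}_{k,3}=\cdots=\overline{\psi}_{k,k+1}=\frac{1}{k}.
\]

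For the forward direction, I would feed $\underline{\psi}_{k,1}=-1$ into the inequality \eqref{eq:altespaper}. This yields $\overline{\psi}_{k,2}+\overline{\psi}_{k,3}+\cdots+\overline{\psi}_{k,k+1}\geq 1$. Since each of these $k$ summands is at most $1/k$ by \eqref{eq:unten}, every inequality must be an equality, forcing $\overline{\psi}_{k,j}=1/k$ for every $j=2,\ldots,k+1$. Translating back by \eqref{eq:alt} gives $\widehat{\lambda}_{k,j}(\underline{\zeta})=0$ for those $j$.

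For the backward direction, I would use \eqref{eq:endli} with $j=1$, which reads $\underline{\psi}_{k,1}+k\overline{\psi}_{k,2}\leq 0$. Substituting the assumption $\overline{\psi}_{k,2}=1/k$ immediately gives $\underline{\psi}_{k,1}\leq -1$, and combined with the trivial lower bound $\underline{\psi}_{k,1}\geq -1$ from \eqref{eq:oben} we obtain $\underline{\psi}_{k,1}=-1$, equivalently $\lambda_{k,1}(\underline{\zeta})=\infty$ via \eqref{eq:alt}.

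The proof is really just bookkeeping once \eqref{eq:alt}, \eqref{eq:endli} and \eqref{eq:altespaper} are in hand; there is no genuine obstacle. The only subtle point worth being careful about is the pinching argument in the forward direction, which crucially uses that all $k$ remaining $\overline{\psi}_{k,j}$'s are simultaneously bounded above by $1/k$, so that the sum bound $\geq 1$ saturates each one. This is precisely the reason why the implication cascades to \emph{all} indices $j\geq 2$, not merely to $j=2$.
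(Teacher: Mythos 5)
Your proof is correct and follows essentially the same route as the paper: translate to the $\underline{\psi},\overline{\psi}$ quantities via \eqref{eq:alt}, pinch the sum in \eqref{eq:altespaper} against the upper bound $1/k$ from \eqref{eq:unten} for the forward direction, and use \eqref{eq:endli} with $j=1$ together with \eqref{eq:oben} for the converse. Nothing to add.
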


\begin{proof}
Actually, the author basically
deduces the left to right implication of \eqref{eq:spaeter} in the last part of the 
proof of Theorem~1 in~\cite{schl} with the aid of the quantities $\underline{\psi}_{k,j}, \overline{\psi}_{k,j}$. 
We want to prove it again rigorously, however. 
With \eqref{eq:alt} one can reinterpret $\lambda_{k,1}(\underline{\zeta})=\infty$ as $\underline{\psi}_{k,1}=-1$.
Together with inequality \eqref{eq:altespaper} and noting $\overline{\psi}_{j}\leq 1/k$ 
for all $1\leq j\leq k+1$ by \eqref{eq:unten}, we conclude 
$\overline{\psi}_{k,2}=\cdots=\overline{\psi}_{k,k+1}=1/k$. Again reinterpreting this
using \eqref{eq:alt}, the right hand side of \eqref{eq:spaeter} follows.
We prove the other direction in \eqref{eq:spaeter}. 
First note that \eqref{eq:alt} with $j=2$ and the right hand side of \eqref{eq:spaeter}
yields $\overline{\psi}_{k,2}=1/k$. Applying \eqref{eq:endli} for $j=1$ 
further gives $\underline{\psi}_{k,1}+1=\underline{\psi}_{k,1}+k\overline{\psi}_{k,2}\leq 0$.
Hence \eqref{eq:oben} implies $\underline{\psi}_{k,1}=-1$, and again with \eqref{eq:alt}
for $j=1$ finally $\lambda_{k,1}(\underline{\zeta})=\infty$. 
\end{proof}

\begin{lemma} \label{helplemma2}
Let $k$ be a positive integer and $\underline{\zeta}\in{\mathbb{R}^{k}}$ arbitrary. Then 
\[
\widehat{\lambda}_{k,1}(\underline{\zeta})=\frac{1}{k} \quad \Longleftrightarrow \quad 
\lambda_{k,2}(\underline{\zeta})\geq \lambda_{k,3}(\underline{\zeta})\geq \cdots\geq
\lambda_{k,k+1}(\underline{\zeta})= \frac{1}{k}.
\]
\end{lemma}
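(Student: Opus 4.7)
The plan is to translate both sides of the equivalence into statements about the Schmidt--Summerer endpoint quantities $\overline{\psi}_{k,1}$ and $\underline{\psi}_{k,k+1}$ via the dictionary \eqref{eq:alt}, and then pin these two extremal $\psi$-values to zero by sandwiching them between the inequalities already collected in Section~\ref{sek2}.

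First I would apply \eqref{eq:alt} with $j=1$: the equality $\widehat{\lambda}_{k,1}(\underline{\zeta})=1/k$ is equivalent to $\overline{\psi}_{k,1}=0$, since $(1+1/k)(1+\overline{\psi}_{k,1})=(k+1)/k$ forces $\overline{\psi}_{k,1}=0$. On the other side, the chain $\lambda_{k,2}(\underline{\zeta})\geq\cdots\geq\lambda_{k,k+1}(\underline{\zeta})=1/k$ is, in view of the automatic monotonicity \eqref{eq:dieerst}, equivalent to the single equality $\lambda_{k,k+1}(\underline{\zeta})=1/k$; by \eqref{eq:alt} with $j=k+1$ this is in turn equivalent to $\underline{\psi}_{k,k+1}=0$. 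Hence the lemma reduces to the purely formal equivalence
\[
\overline{\psi}_{k,1}=0\quad\Longleftrightarrow\quad\underline{\psi}_{k,k+1}=0.
\]

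For the forward direction I would assume $\overline{\psi}_{k,1}=0$. Plugging this into \eqref{eq:freckle} yields $k\underline{\psi}_{k,k+1}\geq 0$, while plugging it into \eqref{eq:endlix} with the index $j=k$ yields $\underline{\psi}_{k,k+1}\leq 0$; the two inequalities together force $\underline{\psi}_{k,k+1}=0$. For the reverse direction I would assume $\underline{\psi}_{k,k+1}=0$. Then \eqref{eq:freckle} immediately gives $\overline{\psi}_{k,1}\geq 0$, and combining this with Minkowski's first theorem in the form \eqref{eq:mink}, which provides the opposite bound $\overline{\psi}_{k,1}\leq 0$, forces $\overline{\psi}_{k,1}=0$.

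There is no serious obstacle: once the dictionary \eqref{eq:alt} is used to rephrase the statement, the argument is just a choice of the right index in \eqref{eq:endlix} (namely $j=k$, the only one that couples $\overline{\psi}_{k,1}$ to $\underline{\psi}_{k,k+1}$) followed by a one-line sandwich against \eqref{eq:freckle} and \eqref{eq:mink}. The substance of the lemma thus lies entirely in the Schmidt--Summerer inequalities already imported from \cite{ss,ssch}; the present proof merely packages them.
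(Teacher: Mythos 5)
Your proof is correct and follows essentially the same route as the paper's: translate both sides via the dictionary \eqref{eq:alt} into $\overline{\psi}_{k,1}=0$ and $\underline{\psi}_{k,k+1}=0$, then sandwich using \eqref{eq:freckle}, \eqref{eq:endlix}, and \eqref{eq:mink}. The only cosmetic difference is that you isolate the single index $j=k$ in \eqref{eq:endlix}, whereas the paper invokes it for all $j$ to conclude $\underline{\psi}_{k,j}\leq 0$ throughout; the substance and the inequalities used are identical.
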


\begin{proof}
First note that by \eqref{eq:dieerst}, it remains to show the
equivalence of the left hand side to the last equality on the right hand side. 
The implication left to right of the lemma is a consequence of \eqref{eq:endlix}.
By \eqref{eq:alt} we know $\widehat{\lambda}_{k,1}(\underline{\zeta})=1/k$ is equivalent 
to $\overline{\psi}_{k,1}=0$, hence \eqref{eq:endlix} implies
$\underline{\psi}_{k,j}\leq 0$ for all $1\leq j\leq k+1$, and again with \eqref{eq:alt}
we conclude $\lambda_{k,k+1}(\underline{\zeta})\geq 1/k$. The reverse inequality 
$\lambda_{k,k+1}(\underline{\zeta})\leq 1/k$ is by \eqref{eq:alt} equivalent to $\underline{\psi}_{k,k+1}\geq 0$,
but \eqref{eq:freckle} and \eqref{eq:mink} indeed imply
\begin{equation} \label{eq:zwamoi}
\underline{\psi}_{k,k+1}\geq -\frac{1}{k}\overline{\psi}_{k,1}\geq 0.
\end{equation}
We turn to the implication right to left. Suppose $\lambda_{k,k+1}(\underline{\zeta})= 1/k$, which
by \eqref{eq:alt} is equivalent to $\underline{\psi}_{k,k+1}=0$. On the other hand,
by virtue of \eqref{eq:alt} the assertion $\widehat{\lambda}_{k,1}(\underline{\zeta})=1/k$ 
is equivalent to $\overline{\psi}_{k,1}=0$. However, this is a consequence of 
$\underline{\psi}_{k,k+1}=0$, \eqref{eq:mink} and \eqref{eq:zwamoi}.
\end{proof}

\section{Proofs of Theorems~\ref{grundlage}, \ref{mainth} and consequences} \label{consequences}
  
\subsection{Proofs} \label{theproofs}
Theorem~\ref{grundlage} is readily inferred by combining the results of
Section~\ref{sek3} with the results we have established
in Section~\ref{sek2}. 
	
\begin{proof}[Proof of Theorem~\ref{grundlage}]
Corollary~\ref{koro} yields \eqref{eq:w} and Lemma~\ref{helplemma} implies \eqref{eq:z}. 
The upper bounds in \eqref{eq:x} are
due to \eqref{eq:2}, since the $\mathbb{Q}$-linear independence condition
is certainly satisfied as Liouville numbers are transcendental
by Roth's Theorem, see Section~\ref{beginn}.
The lower bounds in \eqref{eq:x} are due to Lemma~\ref{helplemma2}.
Finally Corollary~\ref{wiedenn} gives \eqref{eq:y}. 
\end{proof}

We turn to the more technical proof of Theorem~\ref{mainth}.
	
\begin{proof}[Proof of Theorem~\ref{mainth}]
The conditions on the sequence $(q_{l})_{l\geq 1}$ imply $q_{l+1}\geq 2q_{l}$ and thus
the sum converges and $\zeta$ as in the theorem is well-defined.
It is further easily checked by means of \eqref{eq:schwaechst} that $\zeta$ is a Liouville number. 
In view of Theorem~\ref{grundlage}, it suffices to prove 
\begin{equation} \label{eq:reicht}
\lambda_{k,j}(\zeta)\geq \frac{1}{j-1}, \qquad 2\leq j\leq k+1.
\end{equation}
Suppose that for each $C>0$ there exists a large index
$n=n(C)$ for which \eqref{eq:schwaechst} is satisfied.
We consider the pair $C,n$ fixed in the sequel. 
Define the integers
\[
U=q_{n}, \qquad V=q_{n+1},\qquad A=q_{n}\sum_{l=1}^{n}\frac{1}{q_{l}}.
\]
For each pair $(j,m)\in{\{1,2,\ldots,k+1\}^{2}}$ denote $E_{j,m}$ the closest 
integer to $U^{k}V^{j-1}\zeta^{m-1}$ and let
\[
E_{j}=(E_{j,1},\ldots,E_{j,k+1}), \qquad 1\leq j\leq k+1.
\]
In view of the condition $\log q_{n+2}/\log q_{n+1}>k+1$,
we see that $\zeta$ is very close to $U^{-1}A+V^{-1}$, more
precisely
\begin{equation} \label{eq:nonedda}
\vert \zeta-(U^{-1}A+V^{-1})\vert \ll V^{-k-1}. 
\end{equation}
From \eqref{eq:nonedda} it follows that actually
\begin{equation} \label{eq:baldur}
\vert \zeta^{m}-(U^{-1}A+V^{-1})^{m}\vert \ll V^{-k-1}, \qquad 1\leq m\leq k.
\end{equation}
Hence, for any pair $(j,m)\in{\{1,2,\ldots,k+1\}^{2}}$, one checks that $E_{j,m}$ 
is also the closest integer to $U^{k}V^{j-1}(U^{-1}A+V^{-1})^{m-1}$. 
This further implies
\begin{equation} \label{eq:krass}
E_{j,m}=
U^{k}V^{j-1}\sum_{i=0}^{\min\{m-1,j-1\}} \binom{m-1}{i} \left(\frac{A}{U}\right)^{m-1-i}\left(\frac{1}{V}\right)^{i},
\end{equation}
as the right hand side is an integer and the omitted terms in the expansion 
of $U^{k}V^{j-1}(U^{-1}A+V^{-1})^{m-1}$ sum up to a very small number, clearly smaller than $1/2$. 
More precisely, from the definition of $E_{j,m}$ and \eqref{eq:krass}, we deduce 
\begin{equation} \label{eq:angel}
\vert E_{j,m}-U^{k}V^{j-1}\zeta^{m-1}\vert \!\ll\! U^{k}V^{j-1}V^{-j}
= U^{k}V^{-1}, \qquad 1\leq j,m\leq k+1.
\end{equation}
Observe that $E_{j,1}=U^{k}V^{j-1}\geq V^{j-1}$ and moreover
\eqref{eq:schwaechst} implies $U^{k}V^{-1}\ll V^{k/C-1}$. 
Hence \eqref{eq:angel} further yields
\begin{equation} \label{eq:uvbed}
\max_{1\leq i\leq k}-\frac{\log \Vert E_{j,1}\zeta^{i}\Vert}{\log E_{j,1}}\geq 
\frac{1-\frac{k}{C}+\frac{D}{\log V}}{j-1}=:H_{j,C},
\end{equation}
with a constant $D=D(k,\zeta)$ independent from $C,n$.
Moreover, since
\[
E_{1,1}<E_{2,1}<\cdots<E_{k+1,1},
\]
for any $1\leq j\leq k+1$ the first coordinate of any of the first $j$ integer 
vectors $\{E_{1},E_{2},\ldots,E_{j}\}$ is bounded above by $E_{j,1}$.
We conclude from \eqref{eq:uvbed} that the system \eqref{eq:hund},\eqref{eq:katz}
with respect to this value $X:=E_{j,1}$ and the $j$ vectors $(x,y_{1},\ldots,y_{k}):=E_{i}$ 
for $1\leq i\leq j$, will be satisfied for a value $\eta\geq H_{j,C}$.
Now observe that $H_{j,C}$ will be arbitrarily close to $1/(j-1)$ if we let $C\to\infty$,
since then both $k/C$ and $D/\log V$ tend to $0$. 

It remains to prove the linear independence of $E_{1},E_{2},\ldots,E_{k+1}$
to deduce \eqref{eq:reicht}. 
This is equivalent to the regularity of the matrix $(E_{j,m})_{1\leq j,m\leq k+1}$.
However, by virtue of \eqref{eq:krass} the reduction of $(E_{j,m})_{1\leq j,m\leq k+1}$ modulo $V$ 
leads to an upper triangular matrix with $U^{k}$ everywhere on the diagonal, consequently its determinant
in $\mathbb{Z}/V\mathbb{Z}$ equals $U^{k(k+1)}$. Provided that $C>k(k+1)$ we have $0<U^{k(k+1)}<V$
such that this determinant does not vanish, so clearly the determinant of $(E_{j,m})_{1\leq j,m\leq k+1}$
is non-zero as well. 
\end{proof}

\begin{remark}
For $j>1$ the quantities $\lambda_{k,j}(\zeta)$ need not equal the upper bound
$1/(j-1)$ for {\em any} Liouville number $\zeta$ (not of the
form as in Theorem~\ref{mainth}). This will most likely be wrong for many Liouville numbers 
that are normal to any base $b\geq 2$, whose existence was established in~\cite{bugeaud}.
Theorem~\ref{mainth} suggests that there should be equality
for some Liouville numbers normal to any base, though, because for generic choices of the $q_{l}$
there is no reason for the resulting Liouville number not to be normal in any base.
\end{remark}

\begin{remark} \label{rem4}
Roughly speaking, the approximation functions $\psi_{k,j}(Q)$ introduced in Section~\ref{sek2},
related to $(\zeta,\zeta^{2},\ldots,\zeta^{k})$ 
with $\zeta$ as in Theorem~\ref{mainth}, all have the maximum possible range,
that is $(-1,0)$ for $j=1$ and $((j-k-1)/kj,1/k)$ for $2\leq j\leq k+1$, in
any interval $(Q_{0},\infty)$.
This case was already constructed in Corollary~3 in~\cite{schl} for $\underline{\zeta}\in{\mathbb{R}^{k}}$
that satisfy the $\mathbb{Q}$-linear independence property, however Theorem~\ref{mainth} 
provides examples for the special case $\underline{\zeta}=(\zeta,\zeta^{2},\ldots,\zeta^{k})$.
\end{remark}

\subsection{Specifications} \label{specgen}
The most obvious way to specify Theorem~\ref{mainth} is
to put $q_{l}=b^{a_{l}}$ for some fixed integer $b\geq 2$.

\begin{corollary} \label{vereinfachen}
Let $k$ be a positive integer and $(a_{l})_{l\geq 1}$ be a strictly increasing
sequence of positive integers with the property 
\begin{equation} \label{eq:bedingung}
\limsup_{n\to\infty} \min\left\{\frac{a_{n+1}}{a_{n}},\frac{a_{n+2}}{a_{n+1}}\right\}=\infty.
\end{equation}
Let $b\geq 2$ be an integer and let $\zeta=\sum_{l\geq 1} b^{-a_{l}}$. 
Then the quantities $\lambda_{k,j}(\zeta), \widehat{\lambda}_{k,j}(\zeta)$ are
given as in Theorem~{\upshape\ref{mainth}}. 
\end{corollary}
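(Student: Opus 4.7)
The plan is to invoke Theorem~\ref{mainth} directly by checking that the sequence $q_l := b^{a_l}$ satisfies all of its hypotheses. Since $\zeta = \sum_{l\geq 1} b^{-a_l} = \sum_{l\geq 1} 1/q_l$, the corollary will then follow instantly from Theorem~\ref{mainth} (together with Theorem~\ref{grundlage}, which is already folded into the conclusion of Theorem~\ref{mainth}).

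First I would dispose of the purely structural assumptions. Each $q_l = b^{a_l}$ is a positive integer because $b \geq 2$ and $a_l \in \mathbb{N}$. The strict monotonicity $a_l < a_{l+1}$ gives at once $q_l < q_{l+1}$ (since $b\geq 2$) and $q_l\mid q_{l+1}$ (since $b^{a_l}\mid b^{a_{l+1}}$). So the conditions $q_l\in\mathbb{N}$, $q_l\mid q_{l+1}$, $q_l<q_{l+1}$ in Theorem~\ref{mainth} all hold automatically.

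The one point requiring a moment of thought is translating the symmetric limsup hypothesis \eqref{eq:bedingung} into the asymmetric existential form \eqref{eq:schwaechst}. Since $\log q_l = a_l \log b$, we have
\[
\frac{\log q_{n+1}}{\log q_n} = \frac{a_{n+1}}{a_n}, \qquad \frac{\log q_{n+2}}{\log q_{n+1}} = \frac{a_{n+2}}{a_{n+1}}.
\]
Given any $C>0$, set $M := \max\{C, k+1\}$. By \eqref{eq:bedingung} there exist (in fact infinitely many) indices $n$ for which $\min\{a_{n+1}/a_n,\,a_{n+2}/a_{n+1}\} > M$. For any such $n$, the first ratio exceeds $M\geq C$ and the second exceeds $M\geq k+1$, so \eqref{eq:schwaechst} is verified with this choice of $n=n(C)$.

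No genuine obstacle arises: the argument is a verification rather than a new construction. The only mild subtlety is the asymmetry between the two inequalities in \eqref{eq:schwaechst} (one bounded by an arbitrary $C$, the other only by $k+1$), which the symmetric limsup condition \eqref{eq:bedingung} handles by taking the maximum. Once the hypotheses are checked, Theorem~\ref{mainth} delivers the stated values of $\lambda_{k,j}(\zeta)$ and $\widehat{\lambda}_{k,j}(\zeta)$ and the corollary is established.
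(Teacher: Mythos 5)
Your proposal is correct and follows exactly the paper's own one-line argument: set $q_l := b^{a_l}$, check the structural hypotheses of Theorem~\ref{mainth}, and observe that \eqref{eq:bedingung} implies \eqref{eq:schwaechst} since $\log q_l = a_l\log b$. You merely spell out the routine verifications that the paper leaves implicit.
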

\begin{proof}
Apply Theorem~\ref{mainth} with $q_{l}:=b^{a_{l}}$ and note that \eqref{eq:schwaechst}
follows from \eqref{eq:bedingung}.
\end{proof}

\begin{remark}
The condition \eqref{eq:bedingung} of Corollary~\ref{vereinfachen} is in particular satisfied if 
\begin{equation} \label{eq:2mal}
\lim_{n\to\infty}\frac{a_{n+1}}{a_{n}}=\infty.
\end{equation}
\end{remark}

We want to specify Corollary~\ref{vereinfachen} further in two ways. 
\begin{corollary}    \label{kor8}
Let $L$ be as in {\upshape(\ref{eq:a})}. Then for any positive integer $k$ and $1\leq j\leq k+1$
the quantities $\lambda_{k,j}(L),\widehat{\lambda}_{k,j}(L)$
are given as in Theorem~{\upshape\ref{mainth}}.
\end{corollary}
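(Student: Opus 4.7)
The plan is simply to recognize the Liouville number $L = \sum_{n \geq 1} 10^{-n!}$ as a direct special case of Corollary \ref{vereinfachen} and verify its hypothesis. Write $L = \sum_{l \geq 1} b^{-a_l}$ with $b = 10$ and $a_l = l!$; the sequence $(a_l)_{l \geq 1} = (1,2,6,24,120,\ldots)$ is evidently strictly increasing, so the only nontrivial thing to check is the asymptotic condition \eqref{eq:bedingung}.

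The key computation is that
\[
\frac{a_{n+1}}{a_n} = \frac{(n+1)!}{n!} = n+1 \longrightarrow \infty \quad\text{as } n \to \infty.
\]
This is the stronger form \eqref{eq:2mal} noted in the remark after Corollary \ref{vereinfachen}, and it trivially implies \eqref{eq:bedingung}. With the hypothesis of Corollary \ref{vereinfachen} verified, the conclusion for all $\lambda_{k,j}(L)$ and $\widehat{\lambda}_{k,j}(L)$ follows immediately from that corollary (which in turn was reduced to Theorem \ref{mainth} via $q_l := b^{a_l} = 10^{l!}$).

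There is essentially no obstacle: the factorial growth of $l!$ is precisely the regime where the chain of reductions from $L$ through Corollary \ref{vereinfachen} to Theorem \ref{mainth} was designed to apply, and no additional arithmetic properties of $L$ (such as digit distribution or normality) are needed.
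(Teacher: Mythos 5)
Your proof is correct and matches the paper's argument exactly: both set $b=10$, $a_l=l!$ in Corollary~\ref{vereinfachen}, observe $a_{n+1}/a_n=n+1\to\infty$ so that \eqref{eq:2mal} (and hence \eqref{eq:bedingung}) holds, and conclude.
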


\begin{proof}
Let $b=10$ and $a_{l}=l!$ in Corollary~\ref{vereinfachen} and observe that \eqref{eq:2mal} holds.
\end{proof}

Recall the Cantor middle third set is the set of numbers in $[0,1]$ which can
be written without the digit $1$ in the $3$-adic expansion. 

\begin{corollary} \label{remarkerl}
The number $\zeta=2\cdot \sum_{l\geq 1} 3^{-l!}$ is an element of
the Cantor middle third set, whose approximation constants
$\lambda_{k,j}(\zeta),\widehat{\lambda}_{k,j}(\zeta)$ are given as in Theorem~{\upshape\ref{mainth}}.
\end{corollary}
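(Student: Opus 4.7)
The plan is to split the argument in two: first, verify that $\zeta$ lies in the Cantor middle third set; second, reduce the computation of its approximation constants to those of the simpler number $\zeta':=\zeta/2=\sum_{l\geq 1} 3^{-l!}$, which falls directly under Corollary~\ref{vereinfachen}.

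Cantor membership is immediate from the base-$3$ expansion: the sum $\zeta=\sum_{l\geq 1}2\cdot 3^{-l!}$ places the digit $2$ at each position $l!$ and the digit $0$ everywhere else, so $\zeta\in[0,1]$ has no digit $1$ in its ternary expansion and therefore lies in the Cantor set.

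For the approximation constants, the number $\zeta'=\sum_{l\geq 1}3^{-l!}$ satisfies the hypotheses of Corollary~\ref{vereinfachen} with $b=3$ and $a_l=l!$: since $a_{l+1}/a_l=l+1\to\infty$, the strong condition~\eqref{eq:2mal} (and hence~\eqref{eq:bedingung}) holds, so $\lambda_{k,j}(\zeta')$ and $\widehat{\lambda}_{k,j}(\zeta')$ attain the values asserted in Theorem~\ref{mainth} for all admissible $k,j$. It then remains to transfer these values to $\zeta=2\zeta'$ via dilation invariance. Given any integer tuple $(x,y_1,\ldots,y_k)$ solving~\eqref{eq:hund}--\eqref{eq:katz} for $(\zeta,\zeta^2,\ldots,\zeta^k)$ at height $X$ and exponent $\eta$, I form $(x',y_1',\ldots,y_k'):=(2^kx,\,2^{k-1}y_1,\,\ldots,\,2\,y_{k-1},\,y_k)$: multiplying $|2^j\zeta'^{\,j}x-y_j|\leq X^{-\eta}$ by $2^{k-j}$ yields $|\zeta'^{\,j}x'-y_j'|\leq 2^{k-j}X^{-\eta}$, and with $X':=2^kX$ the bounded prefactor is absorbed into any $\eta'<\eta$ once $X'$ is large. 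The transformation $(v_0,v_1,\ldots,v_k)\mapsto(2^kv_0,2^{k-1}v_1,\ldots,v_k)$ is diagonal and non-singular on $\mathbb{Z}^{k+1}$, so it preserves linear independence of any family of tuples. This yields $\lambda_{k,j}(\zeta')\geq\lambda_{k,j}(\zeta)$ and $\widehat{\lambda}_{k,j}(\zeta')\geq\widehat{\lambda}_{k,j}(\zeta)$; applying the same argument to the inverse dilation $\zeta\mapsto\zeta'=\zeta/2$ gives the reverse inequalities, yielding equality of all constants.

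The central obstacle is precisely that $\zeta$ is not of the normal form $\sum b^{-a_l}$ required by Corollary~\ref{vereinfachen}, because of the coefficient $2$. The dilation argument above removes it cleanly. An alternative more in the spirit of Theorem~\ref{mainth} would be to redo its proof with $U=3^{n!}$, $V=3^{(n+1)!}$, $A=2\sum_{l=1}^n 3^{n!-l!}$, and $B=2$, so that $\zeta\approx A/U+B/V$ to order $V^{-(k+1)}$. The analogue of formula~\eqref{eq:krass} for $E_{j,m}$ then picks up an extra factor $B^i=2^i$ inside the sum; reduction modulo $V$ still produces an upper triangular matrix, but with diagonal entries $U^kB^{j-1}=2^{j-1}U^k$, so its determinant mod $V$ is $U^{k(k+1)}\cdot 2^{k(k+1)/2}$, nonzero once the constant $C$ in~\eqref{eq:schwaechst} exceeds $k(k+1)+1$. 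With this modification the regularity of the matrix, and hence the linear independence of $E_1,\ldots,E_{k+1}$, follows exactly as in the original proof.
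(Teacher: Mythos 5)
Your proof is correct and takes essentially the same route as the paper: apply Corollary~\ref{vereinfachen} to $\zeta/2$, then transfer the constants to $\zeta$ by noting that they are unchanged under multiplication by a nonzero rational. The paper contents itself with the one-line remark that ``rational transformations do not affect the quantities''; your explicit diagonal dilation argument substantiates that remark, which is a genuine improvement in completeness.

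One small imprecision: ``applying the same argument to the inverse dilation $\zeta\mapsto\zeta'=\zeta/2$'' would literally use the diagonal matrix with entries $\bigl((1/2)^{k},(1/2)^{k-1},\ldots,1\bigr)$, which does \emph{not} map $\mathbb{Z}^{k+1}$ into itself. The reverse inequality $\lambda_{k,j}(\zeta)\geq\lambda_{k,j}(\zeta')$ (and its hat analogue) should instead use the different integer transformation $(x,y_{1},\ldots,y_{k})\mapsto(x,2y_{1},4y_{2},\ldots,2^{k}y_{k})$: since $\zeta^{j}=2^{j}\zeta'^{\,j}$ one gets $|\zeta^{j}x-2^{j}y_{j}|=2^{j}|\zeta'^{\,j}x-y_{j}|\leq 2^{k}X^{-\eta}$ at the \emph{same} height $X$, and the bounded prefactor $2^{k}$ is absorbed exactly as in your forward direction, while linear independence is again preserved by the nonsingular diagonal integer matrix. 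With that correction the two inequalities combine to equalities, completing the argument. Your alternative, redoing Theorem~\ref{mainth} with $A=2\sum_{l=1}^{n}3^{n!-l!}$ and $B=2$, is also sound: the extra factor $2^{k(k+1)/2}$ is a unit modulo $V=3^{(n+1)!}$, so nonvanishing of the determinant reduces to $U^{k(k+1)}\not\equiv 0\pmod{V}$ just as in the original, and in fact the paper's cutoff $C>k(k+1)$ already suffices. That route is closer in spirit to the paper's main technical proof, but costs more work than the dilation argument, which is the paper's own choice.
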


\begin{proof}
Let $b=3$ and $a_{l}=l!$, then \eqref{eq:2mal} holds and we may apply
Corollary~\ref{vereinfachen} to $\zeta/2$.
Finally rational transformations do not affect the quantities 
$\lambda_{k,j}(\zeta),\widehat{\lambda}_{k,j}(\zeta)$.
\end{proof}
Recall $\chi_{k,j}$ from \eqref{eq:chi}. Theorem~\ref{mainth}, Corollary~\ref{remarkerl} and 
the bounds of $\lambda_{k,j}(\zeta)$ from \eqref{eq:2} 
suggest the following generalizations of Conjecture~\ref{conj}. 
\begin{conjecture}[Weak]   \label{konjw}
Let $k$ be a positive integer and $1\leq j\leq k+1$. The (individual) spectrum of $\lambda_{k,j}(\zeta)$ among 
all real $\zeta$ not algebraic of degree $\leq k$ is $[\chi_{k,j},1/(j-1)]$.

\end{conjecture}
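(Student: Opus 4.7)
The plan is to prove the conjecture by establishing both inclusions for the claimed spectrum. The inclusion ``$\subseteq$'' is already in hand: the upper bound $\lambda_{k,j}(\zeta)\leq 1/(j-1)$ is \eqref{eq:2}, and the lower bound $\lambda_{k,j}(\zeta)\geq \chi_{k,j}$ is trivial for $j\geq 3$ and follows from Dirichlet combined with Minkowski's second theorem for $j\in\{1,2\}$. For the reverse inclusion the two endpoints are already attained by known examples, namely the right endpoint by the Liouville numbers of Theorem~\ref{mainth} and the left endpoint $\chi_{k,j}$ by Lebesgue almost every real, so the substance of the conjecture is to realize every interior value $\eta\in(\chi_{k,j},1/(j-1))$ as some $\lambda_{k,j}(\zeta)$.

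For this I would refine the construction of Theorem~\ref{mainth}. In that proof the exponent $H_{j,C}=(1-k/C+D/\log V)/(j-1)$ was pushed up to $1/(j-1)$ by sending $C=\log q_{n+1}/\log q_n\to\infty$ along a subsequence. To target a prescribed interior $\eta$ instead, I would pick the $q_l$ along a sequence with limiting ratio $\log q_{l+1}/\log q_l\to c(\eta):=k/(1-\eta(j-1))$, calibrated so that $H_{j,c(\eta)}\to\eta$. The same integer-vector construction $E_1,\ldots,E_{k+1}$ and the mod-$V$ determinant argument would then yield the lower bound $\lambda_{k,j}(\zeta)\geq\eta$, at least while $c(\eta)>k(k+1)$; this only covers the upper subinterval $\eta\in(k/((j-1)(k+1)),1/(j-1))$, and values of $\eta$ nearer $\chi_{k,j}$ would require separate constructions, perhaps controlled perturbations of generic numbers so as to tune the exponent upward in a precise way.

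The main obstacle is the matching upper bound $\lambda_{k,j}(\zeta)\leq\eta$: one has to rule out any unexpectedly good rational approximation to $(\zeta,\zeta^2,\ldots,\zeta^k)$. For $j=1$ this is precisely Conjecture~\ref{conj}, open for $k\geq 3$, so any proof of Conjecture~\ref{konjw} necessarily subsumes a full proof of Conjecture~\ref{conj}. For $j\geq 2$ one could attempt to bootstrap from information on $\lambda_{k,1}$ through the parametric geometry of numbers of Section~\ref{sek2}, but the relations \eqref{eq:endli}--\eqref{eq:altespaper} only supply one-sided inequalities on the successive-minima quantities $\underline{\psi}_{k,j},\overline{\psi}_{k,j}$, so they cannot pin down $\lambda_{k,j}(\zeta)$ exactly. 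Some genuinely new Diophantine input therefore appears indispensable, and I expect the complete resolution of Conjecture~\ref{konjw} to lie out of reach of the techniques assembled so far in the paper.
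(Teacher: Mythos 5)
This statement is labelled as a \emph{conjecture} in the paper, and the paper offers no proof of it; it is put forward as an open problem suggested by Theorem~\ref{mainth} and Corollary~\ref{remarkerl}. Your response is therefore essentially the right one: you correctly recognize that the endpoint $1/(j-1)$ is realized by the Liouville numbers of Theorem~\ref{mainth}, that the case $j=1$ of the conjecture is precisely Conjecture~\ref{conj} (open for $k\geq 3$), and that the real obstruction is the matching upper bound $\lambda_{k,j}(\zeta)\leq\eta$, which the one-sided inequalities \eqref{eq:endli}--\eqref{eq:altespaper} of parametric geometry of numbers cannot deliver. Your calibration $c(\eta)=k/(1-\eta(j-1))$ for the lower-bound side, and the resulting restriction $\eta>k/((j-1)(k+1))$ coming from the determinant-mod-$V$ step, are both arithmetically correct and represent a sensible refinement of the paper's construction, though as you say they only give a one-sided estimate.

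One small inaccuracy worth flagging: you assert that the left endpoint $\chi_{k,j}$ is attained by Lebesgue almost every real. This is true for $j\in\{1,2\}$, where $\chi_{k,j}=1/k$ matches Sprind\^{z}uk's generic value, but for $j\geq 3$ one has $\chi_{k,j}=0$, whereas Lebesgue almost every $\zeta$ satisfies $\lambda_{k,j}(\zeta)=1/k\neq 0$. So for $j\geq 3$ the attainment of the left endpoint is itself part of the open content of the conjecture and is not secured by the metrical result. (The paper cites Corollary~9 of~\cite{schl} for attainment of $\chi_{k,j}$ in the $\mathbb{Q}$-linearly independent setting, but that does not automatically produce a vector on the Veronese curve $(\zeta,\zeta^2,\dots,\zeta^k)$.) With that correction, your assessment of what is known, what is reachable by the paper's methods, and where the genuinely new input would be needed is accurate.
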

\begin{conjecture}[Strong] 
Let $k$ be a positive integer and $1\leq j\leq k+1$. The (individual) spectrum of $\lambda_{k,j}(\zeta)$ among 
irrational $\zeta$ in the Cantor set is $[\chi_{k,j},1/(j-1)]$.
\end{conjecture}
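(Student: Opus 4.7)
This statement is flagged as a conjecture rather than a theorem, so the plan below outlines attack strategies rather than a complete argument. The assertion naturally splits into containment of the spectrum in $[\chi_{k,j},1/(j-1)]$ and realization of every value in that interval by some irrational $\zeta$ in the Cantor set.

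Containment is the comparatively tractable half. For any $\zeta$ not algebraic of degree $\le k$ — in particular for transcendental $\zeta$ in the Cantor set — both endpoint bounds $\chi_{k,j}\le \lambda_{k,j}(\zeta)\le 1/(j-1)$ are given immediately by \eqref{eq:2}; the lower bound for $j=2$ can alternatively be read off from \eqref{eq:tijde} together with the monotonicity \eqref{eq:dieerst}, and is trivial for $j\ge 3$. Algebraic irrationals of degree at most $k$ lying in the Cantor set, should any exist for the relevant $k$, would have to be inspected by hand. For realization, the upper endpoint $1/(j-1)$ is already hit by the explicit Cantor element $2\sum_{l\ge 1}3^{-l!}$ of Corollary~\ref{remarkerl}. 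To reach a target $\tau\in(\chi_{k,j},1/(j-1))$, the natural plan is to adapt Theorem~\ref{mainth}: take $q_l=3^{a_l}$ and $\zeta=2\sum_{l\ge 1}3^{-a_l}$, so that $\zeta$ lies in the Cantor set automatically, but weaken the growth assumption \eqref{eq:schwaechst} so that the ratios $\log q_{n+1}/\log q_n$ stabilise at a rate calibrated to $\tau$. Tracking the computation leading to \eqref{eq:uvbed}, the constant $H_{j,C}$ becomes an explicit function of that growth rate which one inverts to recover $\tau$. To pin down $\lambda_{k,j}(\zeta)=\tau$ rather than merely $\lambda_{k,j}(\zeta)\ge\tau$, one must in addition rule out better rational-vector approximations than those arising from truncations of the series, presumably through a combinatorial analysis of the ternary expansion.

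The main obstacle is that the Strong Conjecture implies the Weak Conjecture~\ref{konjw}, which in turn strengthens Conjecture~\ref{conj}; the latter is open for every $k\ge 3$, and even the $k=2$ resolution of~\cite{bere},~\cite{vel} depends on metric machinery (mass-transference and ubiquity) that does not manifestly respect fractal constraints. The Cantor set restriction is what makes the Strong version genuinely harder: the $(k+1)$ linearly independent integer vector approximations to $\underline{\zeta}=(\zeta,\ldots,\zeta^k)$ needed to realise a prescribed $\lambda_{k,j}(\zeta)$ must be constructed from within the combinatorics of the ternary digits of $\zeta$, and blocking competing approximants arising from non-generic digit patterns appears to be the essential new input beyond the construction of Theorem~\ref{mainth}. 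A plausible route would couple the series construction sketched above with a Cantor-set-adapted mass-transference principle in the spirit of~\cite{velani}, but producing such a principle with full control over higher successive minima is, to my knowledge, out of reach of current methods.
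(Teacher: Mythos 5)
The paper offers no proof of this statement: it is explicitly labeled as a conjecture, put forward on the strength of Theorem~\ref{mainth}, Corollary~\ref{remarkerl}, and the bounds in \eqref{eq:2}, with no argument beyond that motivation. You correctly recognize this, and your survey of the situation --- containment follows from \eqref{eq:2}, the endpoint $1/(j-1)$ is realized by the explicit Cantor-set Liouville number of Corollary~\ref{remarkerl}, and the hard part is realizing interior values (where even the classical $j=1$ Conjecture~\ref{conj} is open for $k\geq 3$) --- matches the paper's framing of why this is posed as a conjecture rather than a theorem.

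One small slip in the containment discussion: you say the lower bound $\chi_{k,2}\le\lambda_{k,2}(\zeta)$ ``can alternatively be read off from \eqref{eq:tijde} together with the monotonicity \eqref{eq:dieerst}.'' That reasoning goes the wrong way: \eqref{eq:dieerst} gives $\lambda_{k,1}\ge\lambda_{k,2}$, so the Dirichlet lower bound \eqref{eq:tijde} on $\lambda_{k,1}$ does not propagate down to $\lambda_{k,2}$. The lower bound for $j=2$ really does require \eqref{eq:2} itself (ultimately a Minkowski second-theorem consequence via Lemma~\ref{helplemma2}, i.e.\ $\lambda_{k,k+1}(\zeta)\ge 1/k$ whenever $\widehat{\lambda}_{k,1}(\zeta)=1/k$, combined with \eqref{eq:dieerst}). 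Your main argument does cite \eqref{eq:2}, so the containment direction is fine; just drop the purported alternative route. Beyond that, your assessment of the obstacles --- the need for a Cantor-set-adapted realization of prescribed exponents, with control over all $k+1$ successive minima and exclusion of competing approximants from non-generic ternary patterns --- is a fair account of why the conjecture remains open and why Theorem~\ref{mainth}'s construction, which only reaches the extremal value $1/(j-1)$ under the ``big gap'' hypothesis \eqref{eq:schwaechst}, does not yet settle it.
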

Recall that the analogue of the conjectures for the uniform constants $\widehat{\lambda}_{k,j}(\zeta)$
with the bounds from \eqref{eq:3},\eqref{eq:4} fails heavily due to \eqref{eq:piscis}.

At this point we want to add that unmentioned there,
similar to Corollary~\ref{remarkerl} one can choose numbers in the Cantor middle third set
satisfying the results in Section~2 in~\cite{schl} by small rearrangements.
A concrete example is changing the base from $2$ to $3$ and 
replacing each $\zeta_{j}$ by $2\zeta_{j}$ in Corollary~7 in~\cite{schl}.

Eventually, we want to point out that the results in Section~\ref{sek4}
have an equivalent interpretation in a well-studied linear form problem.
Define $w_{k,j}(\zeta)$ resp. $\widehat{w}_{k,j}(\zeta)$ as the 
supremum of real $\nu$ such that the system
\[
\vert x_{0}\vert\leq X, \qquad \vert x_{0}+\zeta x_{1}+\cdots+\zeta^{k}x_{k}\vert\leq X^{-\nu}
\]
has (at least) $j$ linearly independent solutions $(x_{0},\ldots,x_{k})\in{\mathbb{Z}^{k+1}}$
for arbitrarily large $X$ resp. for all sufficiently large $X$. Then,
the identities
\[
w_{k,j}(\zeta)=\frac{1}{\widehat{\lambda}_{k,k+2-j}(\zeta)}, 
\qquad \widehat{w}_{k,j}(\zeta)=\frac{1}{\lambda_{k,k+2-j}(\zeta)}
\]
hold for $1\leq j\leq k+1$, see (1.24) in~\cite{schlei}. 
Thus, the constants $w_{k,j}(\zeta), \widehat{w}_{k,j}(\zeta)$
can be readily determined for Liouville numbers $\zeta$ involved in Section~\ref{sek4}.

\vspace{1cm}

Thanks to the referee for many structural advices!

\end{document}